\theoremstyle{plain}
\newtheorem{theorem}{Theorem}[section]
\newtheorem{lemma}[theorem]{Lemma}
\theoremstyle{definition}
\newtheorem{definition}[theorem]{Definition}
\newtheorem{remark}[theorem]{Remark}
\newcommand{\Cee}{{\mathbb C}}
\newcommand{\Ree}{{\mathbb R}}
\newcommand{\En}{{\mathbb N}}
\newcommand{\alp}{\alpha}
\newcommand{\del}{\delta}
\newcommand{\Del}{\Delta}
\newcommand{\eps}{\varepsilon}
\newcommand{\lam}{\lambda}
\newcommand{\aaa}{{\mathfrak a}}
\newcommand{\gee}{{\mathfrak g}}
\newcommand{\kay}{{\mathfrak k}}
\newcommand{\pee}{{\mathfrak p}}
\newcommand{\ess}{{\mathfrak s}}
\newcommand{\fW}{{\mathcal W}}
\newcommand{\Ss}{\mathrm{S}}
\newcommand{\SL}{\mathrm{SL}}
\newcommand{\SU}{\mathrm{SU}}
\newcommand{\SO}{\mathrm{SO}}
\newcommand{\Sp}{\mathrm{Sp}}
\newcommand{\Un}{\mathrm{U}}
\newcommand{\Ad}{\mathrm{Ad}}
\newcommand{\ad}{\mathrm{ad}}
\newcommand{\rank}{\mathrm{rank}\,}
\begin{document}

 \title[On $L^1$-$L^2$ dichotomy]{On $L^1$-$L^2$ dichotomy for flat symmetric spaces}
\author{Sanjiv Kumar Gupta}
\address{Dept. of Mathematics \\
Sultan Qaboos University\\
P.O.Box 36 Al Khodh 123\\
Sultanate of Oman}
\email{gupta@squ.edu.om}
\author{Nico Spronk}
\address{Dept. of Pure Mathematics\\
University of Waterloo\\
Waterloo, Ont.,~Canada\\
N2L 3G1}
\email{nico.spronk@uwaterloo.ca}
\thanks{This research is supported in part by an NSERC Discovery Grant and by Sultan
Qaboos University. The first author thank University of Waterloo for their hospitality
when this research was done. }
\subjclass[2010]{Primary 43A90; Secondary 43A85, 22E30}
\keywords{symmetric space, orbital measure, spherical function}
\thanks{This paper is in final form and no version of it will be submitted
for publication elsewhere.}
\maketitle

\begin{abstract}
 For rank 1 flat symmetric spaces, continuous orbital measures admit absolutely continuous
convolution squares, except for Cartan type AI.  Hence $L^1$-$L^2$ dichotomy for these spaces
holds true in parallel to the compact and non-compact rank 1 symmetric spaces.  We also
study $L^1$-$L^2$ dichotomy for flat symmetric spaces of ranks $p=2,3$ of type AIII, i.e.\
associated with $\SU(p,q)/\Ss(\Un(p)\times\Un(q))$ where $q\geq p$.  For continuous orbital measures given by regular points $L^1$-$L^2$ dichotomy holds.  
We study such measures given by certain singular points when $p=2$, and show that
$L^1$-$L^2$ dichotomy fails.  This is the first time such results are observed for any type
of symmetric spaces of rank 2.
\end{abstract}

\section{Introduction}

We begin by defining certain related symmetric spaces.  Let $G$ be a real, connected,
non-compact semisimple Lie group with finite centre, $K$ a maximal compact subgroup of $G$, and
$\theta$ a Cartan involution on $G$ whose fixed point set is $K$.    Then $G/K$ denotes the associated {\em symmetric space}, and we assume this to be irreducible.
Thanks to \cite[p.\ 154]{Wolf}, we have in
the measure algebra $M(G)$ of $G$ that $M(K\backslash G/K)=m_K\ast M(G)\ast m_K$
 is a commutative subalgebra, where $m_K$ is the $K$-invariant
probability measure supported on $K$.

We consider two related Lie groups.  We let $\gee$ and $\kay$
denote the respective Lie algebras of $G$ and $K$.
Then the derivative of $\theta$ is an involution
on $\gee$ with eigenvalues $\pm 1$, where $\kay$ the eigenspace corresponding to $1$.
We let $\pee$ denote the eigenspace of $-1$, and we get a {\em Cartan decomposition}
$\gee=\kay\oplus\pee$.  We note that $\pee$ is a $\kay$-invariant subspace
of $\gee$ for which $[\pee,\pee]\subseteq\kay$.
First, in the complexification $\gee^\Cee=\gee+i\gee$ we
have that $\gee_c=\kay+i\pee$ is a compact Lie subalgebra giving rise to a compact
group $G_c$, which may first be may be realized in the simply connected covering group,
and projected back into the complexification $G^\Cee$.
Then it is well-known that $G_c/K$ is a compact symmetric space
dual to the symmetric space $G/K$, and is considered to be its compact form.
Secondly, the $\kay$-invariance of $\pee$ entails
that $\pee$ is $\Ad(K)$-invariant, and we consider the semidirect product
group $G_0=K\ltimes\pee$.  Then $G_0/K=\pee$ is the {\em flat} or {\em Euclidean}
symmetric space.

We review orbital measures.  Let $a\in G$.  We define
\[
\nu_a=m_K\ast\delta_a\ast m_K\text{ in }M(K\backslash G/K)
\]
so
\[
\langle \nu_a,f\rangle=\int_G f\,d\nu_a
=\int_K\int_K f(k_1ak_2)\,dk_1\,dk_2
\]
for $f$ in $C_c(G)$, where we use Haar probability measure on $K$.
This measure is supported on $KaK$, which is the orbit of $a$ under the
action $((k_1,k_2),a)=k_1ak_2^{-1}$ of $K\times K$ on $G$.  This measure
is singular as the manifold $KaK$ has lower dimension than $G$.
Thanks to Cartan decomposition of the group, $G=KAK$, we may assume that
$a=\exp(H)$ where $H\in \aaa$.  Here $\aaa$ is a maximal abelian subalgebra
of $\gee$, contained in the subspace $\pee$.
Similarly, for $a_c$ in $G_c$ we define
$\nu_{a_c}$ in $M(K\backslash G_c/K)$.

For $(k,H)$ in $G_0$ we can similarly define
$\nu_{(k,H)}=\nu_{(1,H)}$.  However, it is better to consider a measure on $\pee$, as follows.
For $H\in \pee$  we consider the orbit
\[
O_H=\{\Ad(k)H:k\in K\}
\]
and we define the {\em orbital measure} $\mu_H$ on $\pee$ by
\begin{equation}\label{eq:orb_meas}
\langle \mu_H,f\rangle=\int_\pee f\,d\mu_H=\int_K f(\Ad(k)H)\,dk
\end{equation}
for $f$ in $C_c(\pee)$; really for $f$ in $C(\pee)$, as this measure is supported on the compact
orbit $O_H$.  As $\dim O_H\leq \dim\pee-\dim\aaa$, this is always a singular measure.
Notice that for $H$ in $\pee$ and $k$ in $K$ we have in $M(G_0)$ that
\[
\nu_{(1,H)}=m_K\ast\delta_{(1,H)}\ast m_K=m_K\ast\del_{(k,0)(1,H)(k^{-1},0)}\ast m_K
%=m_K\ast\delta_{(1,\Ad(k)H)}\ast m_K
=\nu_{(1,\Ad(k)H)}
\]
showing that $\nu_{(1,H)}\mapsto \mu_H$ is well-defined and injective.  Furthermore it is standard
to compute that if $X,Y\in\pee$ then
\[
\nu_{(1,X)}\ast\nu_{(1,Y)}
=m_K\ast\left(\int_K\int_K  \delta_{(1,\Ad(k_1)X+\Ad(k_2)Y)} \,dk_1\,dk_2\right)\ast m_K
\]
in $M(G_0)$, while $\mu_X\ast\mu_Y=\int_K\int_K \delta_{\Ad(k_1)X+\Ad(k_2)Y}\,dk_1\,dk_2$
in $M(\pee)$.  This shows that $M(K\backslash G_0/K)$ is isomorphic to the algebra
$M_K(\pee)$ of orbital measures.  Furthermore, this isomorphism takes $L^1(K\backslash G_0/K)$
onto $L^1_K(\pee)$.  This can be studied in the language of hypergroups; see
\cite{BH}.

For $\mu$ in $M(G),\,M(G_c)$, or $M(\pee)$, let
$\mu^{\ast r}=\mu\ast\dots\ast\mu$ ($r$-fold convolution).

\begin{definition}\label{def:L1-L2-dic}
\begin{enumerate}
\item The symmetric space $G/K$ satisfies {\em $L^1$-$L^2$ dichotomy} if for
every $a$ in $G$ and $r$ in $\En$, $\nu_a^{\ast r}$ is either singular, or is in
$L^2(G)$.
\item The symmetric space $G_c/K$ satisfies {\em $L^1$-$L^2$ dichotomy} if for
every $a_c$ in $G_c$ and $r$ in $\En$, $\nu_{a_c}^{\ast r}$ is  either singular, or is in
$L^2(G_c)$.
\item The symmetric space $G_0/K$ satisfies {\em $L^1$-$L^2$ dichotomy} if for
every $H$ in $\pee$ and $r$ in $\En$,  $\mu_H^{\ast r}$ singular, or is in
$L^2(\pee)$.
\end{enumerate}
\end{definition}

\subsection{History.}
The roots of problems of the sort in which we are interested
go back to Dunkl~\cite{Du}. He showed in
 $M(\SO(n)\backslash\SO(n+1)/\SO(n))\cong M_{\SO(n)}(\mathbb{S}^n)$, where
 $\cong$ indicates isomorphism as convolution algebras, that
convolution products of continuous elements are in  $L^1(\SO(n+1))$.
Ragozin~\cite{Ra1} showed for a
simple compact Lie group $S$, with $n=\dim S$, that $\mu^{\ast n}\in L^1(S)$ for each 
continuous central measure $\mu$.  Notice that the algebra of continuous central measures
is isomorphic to $M(\Del\backslash S\times S/\Del)$
where $\Del=\{(s,s):s\in S\}$.
In \cite{Ra2}, the case of $\nu_a$ for a symmetric space
$G/K$ as above is studied, and it is shown that for $n=\dim(G/K)$ that $\nu_a^{\ast n}\in L^1(G)$.
For central measures on $S$, the number of convolution powers required to land in $L^1(S)$
has been sharpened considerably; see \cite{GH_GAFA}.  For the symmetric space $G/K$
where $G$ is simple, the results
of \cite{Ra2} are sharpened by Graczyk and Sawyer \cite{GSJFA}
(see the survey \cite{GSSurvey}) and made completely sharp
by Gupta and Hare \cite{GHJMAA17}.

%dichotomy

Gupta and Hare~\cite{GHAdv}
first discovered the $L^1$-$L^2$ dichotomy for orbital measures on $S$.
Since then, many related results followed, including \cite{HH1,HJSY,Wr,GHboll,GHjlt21}.
In particular, the $L^1$-$L^2$ dichotomy always holds for orbital measures
in $M_S(\ess)\cong M(S\backslash S\ltimes\ess/S)$
($\ess$ is the Lie algebra of a simple compact Lie group $S$, acted upon by adjoint action),
as shown in classical cases in \cite{GHS}, and then generally by Hare and He~\cite{HH2}.
In \cite{AGPsu2so2} (also \cite{HH}), and \cite{GHaustra2022}
it is proved that $L^1$-$L^2$ dichotomy fails
in $\SU(2)/\SO(2)$ and $\SL(2,\Ree)/\SO(2)$, respectively.

\subsection{Present results.} We study the $L^1$-$L^2$ dichotomy for certain flat symmetric
spaces, $G_0/K$, i.e.\ for orbital measures
defined in (\ref{eq:orb_meas}).

We succeed fully in the rank 1 setting, showing that
$L^1$-$L^2$ dichotomy holds, except for $G_0/K=\SL(2,\Ree)_0/\SO(2)$ where it fails.
Notice that this coincides with the results about rank 1 spaces $G_c/K$ and $G/K$, noted above.

 We study cases
 $G_0/K=\SU(p,q)_0/\Ss(\Un(p)\times \Un(q))$ for $p=2,3$, $q\geq p$.  For $p=2$ the
 $L^1$-$L^2$ dichotomy holds for regular $H$, but the singular points prove more interesting.
 For convenience, we classify singular points as being either type D or type A.  For type D
 we obtain $L^1$-$L^2$ dichotomy when $q>2$, but fails if $q=2$.  For type A points
 we learn failure of $L^1$-$L^2$ dichotomy for $q\geq 3$, with increasingly sharp bounds
 for increasing $q$.  However, we cannot settle the case $q=2$.  For $p=3$ we obtain
 $L^1$-$L^2$ dichotomy for regular points, and indicate the limitations of our methods.
 It would be extremely interesting to learn if there are analogous results to ours
 for the $L^1$-$L^2$ dichotomy in, for example, $\SU(2,q)/\Ss(\Un(2)\times \Un(q))$ and/or
 $\SU(2+q)/\Ss(\Un(2)\times \Un(q))$.

Our method is roughly as follows.  We can represent
the spherical transform of the convolution power of an orbital measure $\mu_H^{\ast n}$
in terms of a basic spherical function.  For certain symmetric spaces,
amongst those which the spherical functions are given explicitly by
Ben Sa\"{\i}d and {\O}rsted \cite{BSO}, we can then
test what powers of the basic spherical functions are square integrable using asymptotic
estimates with Bessel functions.
The Plancherel formula for orbital measures then implies
the desired result.

%ALSO COMPARE WITH KNOWN $G/K$ CASES

\section{Notation and basic facts}

\subsection{Lie algebra set-up} Let $G$ be a connected, real, non-compact semisimple Lie group
with finite centre, and $K$ is maximal compact subgroup fixed by the Cartan involution.
Let $\gee=\kay\oplus\pee$ be the corresponding Cartan decomposition of the group's
Lie algebra, where $\kay$ is the Lie algebra of $K$ and $\pee$ its orthogonal complement
with respect to the Killing form $B$ on $\gee$.  We assume that the symmetric space
$G/K$ is irreducible.

We follow \cite[Chapter IX]{He}.
Let $\aaa$ be a subspace of $\pee$ which is maximally abelian as a subalgebra of $\gee$.
Any two such subspaces are conjugate, and in fact we have $\pee=\bigcup_{k\in K}\Ad(k)\aaa$,
showing that each $O_H\cap\aaa\not=\varnothing$ for $H$ in $\pee$.
The {\em rank} is defined as
\[
\rank G/K=\dim\aaa.
\]
For $\alp$ in $\aaa^*$ we let
\[
\gee_\alp=\{Y\in \gee:\ad(X)Y=[X,Y]=\alp(X)Y\text{ for all }X\text{ in }\aaa\}
\]
and let the restricted roots be given by
\[
\Phi=\{\alp\in\aaa^*\setminus\{0\}:\gee_\alp\not=\{0\}\}
\]
where we note that $\aaa=\gee_0\cap\pee$.
We consider the set
\[
\aaa'=\aaa\setminus\bigcup_{\alp\in\Phi}\ker\alp
\]
of {\em regular} elements
and fix a connected component $\aaa'$, which is called a Weyl chamber and denoted by $\aaa^+$.
Any two of these Weyl chambers may be interchanged via the action of the Weyl group
$W\subset K$. We then consider the positive restricted roots
\[
\Phi^+=\{\alp\in\Phi:\alp(X)>0\text{ for all }X\text{ in }\aaa^+\}.
\]
We let the {\em multiplicity} of a positive restricted root be given by
\[
m_\alp=\dim\gee_\alp\text{ for }\alp\text{ in }\Phi^+.
\]
The Killing form $B$ is an inner product on $\pee$, hence on $\aaa$, and thus for each $\lam$ in
$\aaa^*$ there is $E_\lam$ in $\aaa$ for which $\lam=B(E_\lam,\cdot)$.

\subsection{Spherical transform on flat symmetric space}
Recall that $G_0=K\ltimes\pee$ so $G_0/K\cong\pee$.
Given $\lam$ in $\aaa^*$, the {\em spherical function} on $\pee$ is given
as in \cite[Proposition 4.8]{Hegroups} by
\begin{equation}\label{eq:sph_func}
\psi_\lam(Y)=\int_K e^{iB(E_\lam,\Ad(k)Y)}\,dk.
\end{equation}
Then for any compactly supported $\nu$ in $M(\pee)$ we let its {\em spherical transform}
\begin{equation}\label{eq:sph_trans}
\hat{\nu}(\lam)=\int_\pee\psi_{-\lam}(Y)\,d\nu(Y).
\end{equation}
We shall use the following.  Recall that the orbital measure $\mu_H$ is defined in (\ref{eq:orb_meas}).

\begin{lemma}\label{lem:sph_trans_muH}
Let $H_1,\dots,H_r\in \pee$ and $\rho=\mu_{H_1}\ast\dots\ast\mu_{H_r}$.  Then
\[
\hat{\rho}(\lam)=\prod_{i=1}^r\psi_{-\lam}(H_i)\text{ for }\lam\text{ in }\aaa^*.
\]
\end{lemma}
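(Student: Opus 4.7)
The plan is to combine the definition (\ref{eq:sph_trans}) of the spherical transform with the iterated-convolution description of orbital measures recorded just before Lemma \ref{lem:sph_trans_muH}, namely
\[
\rho=\int_{K^r}\delta_{\sum_{i=1}^r\Ad(k_i)H_i}\,dk_1\cdots dk_r,
\]
and then unwind by hand using the defining integral (\ref{eq:sph_func}) of $\psi_{-\lam}$.  There is no need to invoke deeper spherical-function machinery; everything reduces to Fubini on the compact group $K^{r+1}$ plus translation invariance of Haar measure.

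First I would unpack the spherical transform as
\[
\hat\rho(\lam)=\int_\pee\psi_{-\lam}(Y)\,d\rho(Y)
=\int_{K^r}\psi_{-\lam}\!\Bigl(\sum_{i=1}^r\Ad(k_i)H_i\Bigr)\,dk_1\cdots dk_r.
\]
Next, substituting (\ref{eq:sph_func}) and using linearity of $B(E_{-\lam},\cdot)$ and of $\Ad(k)$ in the sum,
\[
\psi_{-\lam}\!\Bigl(\sum_{i=1}^r\Ad(k_i)H_i\Bigr)
=\int_K e^{iB(E_{-\lam},\Ad(k)\sum_{i}\Ad(k_i)H_i)}\,dk
=\int_K\prod_{i=1}^r e^{iB(E_{-\lam},\Ad(kk_i)H_i)}\,dk.
\]
Plugging this into the preceding display and applying Fubini gives an integral over $K^{r+1}$ whose integrand is a product of factors, the $i$th depending only on $k$ and $k_i$.

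The last step is the change of variables $k_i\mapsto k^{-1}k_i$ in each of the $r$ inner integrals, which is licit by translation invariance of Haar measure on $K$ and eliminates the shared variable $k$ from every factor.  The $dk$ integration then trivially contributes $1$, and Fubini factors the resulting product:
\[
\int_{K^r}\prod_{i=1}^r e^{iB(E_{-\lam},\Ad(k_i)H_i)}\,dk_1\cdots dk_r
=\prod_{i=1}^r\int_K e^{iB(E_{-\lam},\Ad(k_i)H_i)}\,dk_i
=\prod_{i=1}^r\psi_{-\lam}(H_i),
\]
which is the claimed formula.

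There is no genuine obstacle here; the lemma is essentially a restatement of the fact that $\nu\mapsto\hat\nu$ is an algebra homomorphism on $M_K(\pee)$ which evaluates $\mu_H$ to $\psi_{-\lam}(H)$, and the only ingredients are compactness of $K$, Fubini, and Haar invariance.  The only place where one must be a little careful is keeping track of the bi-invariance of the exponential integrand under the substitution $k_i\mapsto k^{-1}k_i$, which is what allows the shared averaging variable $k$ to be absorbed and the product to separate.
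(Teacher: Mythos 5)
Your argument is correct, but it is organized differently from the paper's proof. The paper argues in two steps: it first computes $\widehat{\mu_H}(\lam)=\psi_{-\lam}(H)$ from the $\Ad(K)$-invariance of the spherical function, and then proves that the spherical transform is multiplicative on \emph{all} compactly supported measures in $M_K(\pee)$, by using the $\Ad(K)$-invariance of the measures themselves to insert an extra average over $K$ and then separating the factors with Fubini; the lemma follows by iterating this binary multiplicativity. You instead specialize from the outset to orbital measures, writing the $r$-fold convolution explicitly as $\rho=\int_{K^r}\delta_{\sum_{i}\Ad(k_i)H_i}\,dk_1\cdots dk_r$ (the two-fold version appears in the introduction, and the $r$-fold version is an immediate iteration since convolution on the vector group $\pee$ is $\mu\ast\nu(f)=\int\int f(X+Y)\,d\mu(X)\,d\nu(Y)$), and then decouple the shared averaging variable $k$ by the single change of variables $k_i\mapsto k^{-1}k_i$ in the inner integrals. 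The underlying mechanism --- Fubini on a compact group together with invariance of Haar measure --- is the same in both arguments; your version is more direct and handles all $r$ factors at once, whereas the paper's route yields the stronger statement that $\nu\mapsto\hat{\nu}$ is an algebra homomorphism on compactly supported $K$-invariant measures, not merely on products of orbital measures. One small wording point: what legitimizes your substitution is simply the (left) translation invariance of Haar measure on $K$ applied for each fixed $k$; the closing phrase about ``bi-invariance of the exponential integrand'' is not quite the right description of that step, although the step itself is carried out correctly.
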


\begin{proof}
Let $M_K(\pee)$ denote the space of orbital measures, i.e.\ $\mu$ for which
$\int_\pee f\,d\mu=\int_\pee f(\Ad(k)X)\,d\mu(X)$ for each $f$ in $C_c(\pee)$
and $k$ in $K$.  Notice that each $\mu_H\in M_K(\pee)$.  Also,
if $\lam\in\pee^*$ then $\psi_\lam(\Ad(k)Y)=\psi_\lam(Y)$ for $k$ in $K$ and
$Y\in \pee$, so
\[
\widehat{\mu_H}(\lam)=\int_K\psi_{-\lam}(\Ad(k)H)\,dk=\int_K\psi_{-\lam}(H)\,dk=\psi_{-\lam}(H).
\]
If $\mu$ and $\nu$ are two compactly supported elements of $M_K(\pee)$ then for
$\lam$ in $\pee^*$ we have
\begin{align*}
&\widehat{\mu\ast\nu}(\lam)
=\int_\pee\int_\pee \int_K e^{iB(E_{-\lam},\Ad(k)(X+Y))}\,d\mu(X)\,d\nu(Y)\,dk \\
&\;=\int_K\int_\pee e^{iB(E_{-\lam},\Ad(k)X)}\,d\mu(X)
\int_\pee e^{iB(E_{-\lam},\Ad(k)Y)}\,d\nu(Y)\,dk \\
&\;=\int_K\int_\pee e^{iB(E_{-\lam},\Ad(kk')X)}\,d\mu(X)
\int_\pee\int_K e^{iB(E_{-\lam},\Ad(kk')Y)}\,dk'\,d\nu(Y)\,dk \\
&\;=\int_\pee\int_K e^{iB(E_{-\lam},\Ad(kk')X)}\,d\mu(X)\,dk
\int_\pee \int_Ke^{iB(E_{-\lam},\Ad(k')Y)}\,dk'\,d\nu(Y) \\
&\;=\hat{\mu}(\lam)\hat{\nu}(\lam)
\end{align*}
where we have used Fubini's theorem, and probability Haar measure on $K$.
The desired result follows from combining these two formulas.
\end{proof}

We require the Plancherel formula below.

\begin{lemma}\label{lem:plancherel}
Let $\mu$ be an $\Ad(K)$-invariant measure on $\pee$.  Then $\mu\in L^2(\pee)$
if and only if
\[
\|\mu\|_{L^2(\pee)}^2=c\int_{\aaa^\ast_+}|\hat{\mu}(\lam)|^2\del(\lam)\,d\lam<\infty
\]
where $\displaystyle \del(\lam)=\left|\prod_{\alp\in\Phi^+} \alp(E_\lam)^{m_\alp}\right|$,
$\aaa^*_+=\{\lam\in \aaa^*:E_\lam\in\aaa^+\}$,
and $c>0$ is a constant.
\end{lemma}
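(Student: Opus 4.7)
The plan is to identify the spherical transform on the flat symmetric space with the Euclidean Fourier transform on $\pee$ restricted to $\Ad(K)$-invariant data, and then combine the standard Euclidean Plancherel theorem with the polar integration formula coming from the Cartan decomposition $\pee=\bigcup_{k\in K}\Ad(k)\aaa$.

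First I would check that, for $\Ad(K)$-invariant $\mu$, the spherical transform agrees with the ordinary Fourier transform under the identification $\lam\leftrightarrow E_\lam$ provided by $B$. Starting from (\ref{eq:sph_trans}) and (\ref{eq:sph_func}), Fubini's theorem and the $\Ad(K)$-invariance of $\mu$ yield
\[
\hat{\mu}(\lam)
=\int_K\int_\pee e^{-iB(E_\lam,\Ad(k)Y)}\,d\mu(Y)\,dk
=\int_\pee e^{-iB(E_\lam,Y)}\,d\mu(Y)=\mathcal{F}\mu(E_\lam),
\]
where $\mathcal{F}$ denotes the usual Fourier transform on the Euclidean space $(\pee,B)$. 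In particular, $\mathcal{F}\mu$ is itself $\Ad(K)$-invariant.

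Next, the Euclidean Plancherel theorem on $\pee$ gives $\mu\in L^2(\pee)$ if and only if $\mathcal{F}\mu\in L^2(\pee)$, with $\|\mu\|_{L^2(\pee)}^2=c_0\|\mathcal{F}\mu\|_{L^2(\pee)}^2$ for an absolute constant $c_0>0$. Since $|\mathcal{F}\mu|^2$ is $\Ad(K)$-invariant and nonnegative, I would then invoke the standard polar integration formula on $\pee$ (a direct consequence of the Cartan decomposition plus the Jacobian computation; see e.g.\ \cite[Ch.\ I]{He}): for every $\Ad(K)$-invariant nonnegative measurable $g$,
\[
\int_\pee g(X)\,dX=c_1\int_{\aaa^+}g(H)\prod_{\alp\in\Phi^+}|\alp(H)|^{m_\alp}\,dH.
\]
Setting $g=|\hat{\mu}|^2$ and using the bijection $H=E_\lam$ between $\aaa^+$ and $\aaa^*_+$ (under which $\alp(H)=\alp(E_\lam)$) converts the right-hand side into the weighted integral in the statement, with overall constant $c=c_0c_1$ and weight $\del(\lam)$. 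Finiteness on either side is then automatically equivalent.

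The argument is essentially bookkeeping, so there is no real obstacle; the only point requiring care is invoking the polar integration formula on $\pee$ in the correct form and tracking the multiplicative constant. No input beyond the standard Cartan decomposition and the Euclidean Plancherel theorem is needed.
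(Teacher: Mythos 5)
Your argument is correct, but it runs along a different track than the paper's. The paper simply quotes Helgason's Plancherel theorem for the spherical transform on the flat symmetric space (\cite[Chapter IV, Theorem 9.1]{Hegroups}), which already produces the weighted integral $\int_{\aaa^*}|\hat{\mu}(\lam)|^2\del(\lam)\,d\lam$ over all of $\aaa^*$, and then spends its effort on the one remaining point: using the Weyl group action on the chambers to replace $\aaa^*$ by $\aaa^*_+$ at the cost of the constant $|W|/|N|$. You instead reprove that Plancherel formula from first principles: you observe that for $\Ad(K)$-invariant $\mu$ the spherical transform is just the Euclidean Fourier transform evaluated at $E_\lam$, apply the classical Plancherel theorem on $(\pee,B)$, and then convert $\int_\pee|\mathcal{F}\mu|^2$ into a chamber integral via the polar integration formula $\int_\pee g=c_1\int_{\aaa^+}g(H)\prod_{\alp\in\Phi^+}|\alp(H)|^{m_\alp}\,dH$ for $\Ad(K)$-invariant $g$. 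This is essentially the proof of the cited Helgason theorem in the flat case, so nothing is lost; what you gain is a self-contained, more elementary derivation that lands directly on $\aaa^+$ and thus never needs the Weyl-group reduction step, while the paper's citation route is shorter. Two small points of care in your write-up: the hypothesis should be read as $\mu$ a finite (indeed compactly supported, as in (\ref{eq:sph_trans})) measure so that $\mathcal{F}\mu$ is defined pointwise, and the Euclidean Plancherel step you need is the measure-theoretic version (a finite measure has an $L^2$ density if and only if its Fourier--Stieltjes transform lies in $L^2$, with proportional norms), which is standard but worth stating since $\mu$ is a priori only a measure.
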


\begin{proof}
We have that \cite[Chapter IV, Theorem 9.1]{Hegroups} gives
\[
\|\mu\|_{L^2(\pee)}^2=C\int_{\aaa^*}|\hat{\mu}(\lam)|^2\del(\lam)\,d\lam<\infty
\]
for a constant $C>0$.  Since $B$ is $\Ad(K)$-invariant,
hence $\Ad(W)$ invariant, and $W$ interchanges Weyl chambers,
the adjoint action of $W$ interchanges dual Weyl chambers $\aaa^*_+$.
Thus
\[
\int_{\aaa^*}|\hat{\mu}(\lam)|^2\del(\lam)\,d\lam
=\frac{|W|}{|N|}\int_{\aaa^*_+}|\hat{\mu}(\lam)|^2\del(\lam)\,d\lam
\]
where $N=\{w\in W:w(\aaa^+)=\aaa^+\}$.
\end{proof}

Let us also note the following result regarding non-singularity of product measures.

\begin{lemma}\label{lem:inL1}
Suppose $G$ is a simple Lie group.
\begin{enumerate}
\item Let $H$ be an element of $\pee$ which is regular in the sense that it is conjugate
to a regular element of $\aaa$.  Then $\mu_H^{\ast 2}\in L^1(\pee)$.
\item Let $H\in \pee\setminus\{0\}$.  If $\rank G/K\geq 2$, then $\mu_H^{\ast k(G)}\in L^1(\pee)$ where
\[
k(G)=\begin{cases} %2 &\text{if }\rank G/K=1 \\
\rank G/K+1 &\text{if }\Phi^+\text{ is of type A$_n$ or D$_3$} \\
\rank G/K &\text{otherwise.}\end{cases}
\]
\end{enumerate}
\end{lemma}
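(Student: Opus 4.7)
The plan is to translate absolute continuity of $\mu_H^{\ast r}$ into a submersion statement: since $\mu_H^{\ast r}$ is the pushforward of product Haar measure on $K^r$ under the sum map $\Psi_r\colon K^r\to\pee$ defined by $\Psi_r(k_1,\dots,k_r)=\sum_{i=1}^r\Ad(k_i)H$, it suffices (by the co-area formula, or equivalently the implicit function theorem combined with Sard) to exhibit an open subset of $K^r$ on which $d\Psi_r$ has full rank $\dim\pee$. Compact support of $\mu_H^{\ast r}$ then promotes the resulting smooth density to an element of $L^1(\pee)$.

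For part (1), assume $H\in\aaa$ is regular. Write $\pee_\alp=\pee\cap(\gee_\alp\oplus\gee_{-\alp})$ and $\kay_\alp=\kay\cap(\gee_\alp\oplus\gee_{-\alp})$, so $\pee=\aaa\oplus\bigoplus_{\alp\in\Phi^+}\pee_\alp$. Since $\alp(H)\neq 0$ for every $\alp\in\Phi^+$, a direct computation yields $\ad(\kay)H=\bigoplus_{\alp\in\Phi^+}\pee_\alp$, so $T_HO_H$ has codimension $\rank G/K$ in $\pee$. The image of $d\Psi_2(e,k)$ is $\ad(\kay)H+\Ad(k)\ad(\kay)H$, so I seek $k\in K$ for which $\Ad(k)\bigl(\bigoplus_\alp\pee_\alp\bigr)$ projects onto $\aaa$ modulo $\bigoplus_\alp\pee_\alp$. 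Writing $k=\exp Z$ and expanding $\Ad(\exp Z)=1+\ad Z+\tfrac12(\ad Z)^2+\dots$, the bracket identity $[\kay_\alp,\pee_\alp]\cap\aaa=\Ree E_\alp\neq 0$ (with $E_\alp\in\aaa$ the element dual to $\alp$ via the Killing form) shows that for generic $Z\in\kay$ the $\aaa$-projections of $[Z,\ad(\kay)H]$ exhaust all of $\aaa$; hence $d\Psi_2$ is surjective on a dense open subset of $K^2$.

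For part (2), a singular $H\in\pee\setminus\{0\}$ has $\ad(\kay)H=\bigoplus_{\alp(H)\neq 0}\pee_\alp$, whose codimension in $\pee$ is $\dim\aaa$ augmented by the dimensions of the walls containing $H$. The image of $d\Psi_r$ is $\sum_{i=1}^r\Ad(k_i)\ad(\kay)H$, and the question becomes whether, for generic $k_1,\dots,k_r\in K$, this sum fills $\pee$. This reduces to a combinatorial question on the restricted root system: how many $W$-translates of the non-vanishing subset $\Phi_H=\{\alp\in\Phi^+:\alp(H)\neq 0\}$ are needed so that the corresponding $\Ad(k_i)\pee_\alp$ project collectively onto $\aaa$. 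A case-by-case analysis of the irreducible restricted root systems shows $\rank G/K$ copies suffice in types $B,C,D_{\geq 4},E,F,G$, while for types $A_n$ and $D_3$ a structural obstruction (a distinguished $\aaa$-direction annihilated by the $\aaa$-projection of any single orbit tangent) forces one additional copy. Executing this case analysis precisely, in particular identifying the obstruction in the two exceptional types and checking that $\rank G/K+1$ conjugates genuinely break it, is the main technical obstacle.
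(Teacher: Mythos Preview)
Your approach is genuinely different from the paper's.  The paper does not give a self-contained argument at all: for (1) it simply invokes \cite[Theorem~4.3]{AG}, and for (2) it quotes the sharp absolute continuity results for $\nu_{\exp H}^{\ast r}$ on the \emph{non-compact} space $G/K$ from \cite{GSJFA} and \cite{GHJMAA17}, and then transfers them to the flat space via the contraction principle \cite[Theorem~3.1]{AG}.  Your submersion strategy is, in spirit, the method underlying \cite{AG} for the flat case, so for part (1) you are essentially reproving the cited result; the computation $T_HO_H=\bigoplus_{\alp\in\Phi^+}\pee_\alp$ and the observation $[\kay_\alp,\pee_\alp]\supseteq\Ree E_\alp$ are correct, and the passage from the first-order calculation in $Z$ to surjectivity of $d\Psi_2(e,\exp Z)$ for small generic $Z$ is a standard open-condition argument.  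So (1) is fine.

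Part (2), however, has a real gap.  First, as you yourself flag, the case-by-case root-system analysis is not carried out --- and that analysis is exactly where all the work lies; the references \cite{GSJFA,GHJMAA17} span many pages doing precisely this on the group side.  Second, the reduction you describe is not accurate as written: the $k_i$ range over all of $K$, not over the Weyl group, and $\Ad(k)\pee_\alp$ for general $k$ is not a root space at all, so phrasing the problem as ``how many $W$-translates of $\Phi_H$ are needed'' conflates two different actions.  One can try to use Weyl elements as \emph{particular} choices of $k_i$ to get sufficiency, but then the tangent spaces $\Ad(w_i)\ad(\kay)H$ are still sums of root spaces and never contain $\aaa$, so Weyl elements alone can never make $d\Psi_r$ surjective; one must combine them with the infinitesimal perturbation argument you used in (1), and organizing this correctly for singular $H$ is delicate.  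The paper sidesteps all of this by importing the finished product from $G/K$ and applying the transfer map, which is why its proof is three lines.
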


\begin{proof}
(1) Without loss of generality, $H\in\aaa'$.
Then by the proof of \cite[Theorem 4.3]{AG}, $\mu_H^{\ast 2}\in L^1(\pee)$.

(2) Without loss of generality, $H\in\aaa$.  Then by \cite[Corollary 7]{GSJFA},
$\nu_{\exp(H)}^{\ast (\rank G/K+1)}\in L^1(G)$.  This estimate is improved
in \cite[Corollary 3]{GHJMAA17} in cases complementary to A$_n$ and D$_3$.
(Though not explicitly stated in \cite{GHJMAA17}, the result is valid for $\rank G/K\geq 2$.)  Then,
we apply \cite[Theorem 3.1]{AG} to get the desired result.
\end{proof}

\section{Rank 1 case}

We obtain a very complete answer in the rank 1 case.  Let us begin by listing
the non-compact irreducible rank 1 symmetric spaces $G/K$, their restricted root
systems $\Phi$, and multiplicities of members of $\Phi^+$.
We are using information from \cite[p. 518]{He}, \cite[Appendix]{HH}, and, for the FII case,
\cite[11.2.3]{Wolf}.

\begin{center}
\begin{tabular}{lllll}\hline
& space & $\Phi^+$  & $m_\alp$ & $m_{2\alp}$ \\ \hline
AI & $\SL(2,\Ree)/\SO(2)$ & A$_1$ & 1 & $-$ \\
AII & $\SL(2,\mathbb{H})/\Sp(2)$ & A$_1$ & 4 & $-$ \\
AIII & $\SU(1,q)/\Ss(\Un(1)\times \Un(q))$ & BC$_1$ & $2(q-1)$ & 1 \\
BDI & $\SO_0(1,q)/1\times \SO(q)$ & A$_1$ & $q-1$ & $-$ \\
CII & $\Sp(1,q)/\Sp(1)\times\Sp(q)$ & BC$_1$ & $4(q-1)$ & 3 \\
FII & $F_{4,B_4}/\mathrm{Spin}(9)$ & BC$_1$ & 8 & 7 \\ \hline
\end{tabular}
\end{center}
The leftmost labels are from Cartan's classification of symmetric spaces, and we always have
$q\geq 2$.  Note that if $q=2$ then case BDI is isomorphic to AI.  Also, $\SL(2,\mathbb{H})$
is denoted $\SU^*(4)$ in some texts, and
$F_{4,B_4}$ denotes the non-compact real form of the exceptional compact Lie group $F_4$.

\begin{theorem}\label{theo:rank1}
Let $G/K$ be a non-compact, rank 1, irreducible symmetric space.
Then the flat symmetric
space $G_0/K$ satisfies the $L^1$-$L^2$ dichotomy except for the case
AI (which is BDI with $q=2$).
\end{theorem}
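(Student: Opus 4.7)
The plan is to combine the Plancherel formula of Lemma \ref{lem:plancherel} with an explicit Bessel-function form of the spherical function in rank $1$, and then reduce $L^2(\pee)$-membership for $\mu_H^{\ast r}$ to the convergence of a one-dimensional integral governed by $m_\alp+m_{2\alp}$.

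First, a standard rank-$1$ fact is that $K$ acts transitively on Killing-form spheres in $\pee$ (since every nonzero element of $\aaa$ is regular, $\dim K/M=\dim\pee-1$, and an orbit of maximal dimension on a sphere is the whole sphere). So for $H\in\pee\setminus\{0\}$ the orbit $O_H$ is the sphere of radius $\|H\|$ in $\pee$, and $\mu_H$ is its uniform probability measure. By (\ref{eq:sph_func}), $\psi_\lam(H)$ is then the Fourier transform of this sphere measure, which is a normalized Bessel function
\[
\psi_\lam(H)=j_\nu(\|E_\lam\|\cdot\|H\|), \qquad \nu=\tfrac{1}{2}(\dim\pee-2),
\]
where $j_\nu(t)=2^\nu\Gam(\nu+1)J_\nu(t)/t^\nu$, normalized so $j_\nu(0)=1$. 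Using Lemma \ref{lem:sph_trans_muH}, the rank-$1$ identity $\dim\pee-1=m_\alp+m_{2\alp}$ (read off the table), and the change of variable $s=\|E_\lam\|\cdot\|H\|$, the Plancherel integral of Lemma \ref{lem:plancherel} applied to $\mu_H^{\ast r}$ reduces, up to a positive constant depending only on $H$, to
\[
I_r=\int_0^\infty |j_\nu(s)|^{2r}\,s^{m_\alp+m_{2\alp}}\,ds.
\]

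Next, the classical asymptotic $J_\nu(s)=O(s^{-1/2})$ as $s\to\infty$ gives $|j_\nu(s)|^{2r}=O(s^{-r(m_\alp+m_{2\alp})})$, so the integrand at infinity behaves like $s^{(1-r)(m_\alp+m_{2\alp})}$. Since $|j_\nu(0)|=1$ makes integrability near $0$ automatic, $I_r<\infty$ iff $(r-1)(m_\alp+m_{2\alp})>1$. For $r\geq 3$ this always holds (as $m_\alp\geq 1$), while for $r=2$ it is equivalent to $m_\alp+m_{2\alp}\geq 2$. Inspecting the table, the latter holds in every rank-$1$ non-compact irreducible case except AI, where $m_\alp=1$ and $m_{2\alp}=0$.

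Finally, $\mu_H$ itself is always singular: it equals $\del_0$ if $H=0$, and is supported on the codimension-$1$ sphere $O_H$ otherwise. Hence outside of AI, $\mu_H^{\ast r}\in L^2(\pee)$ for every $r\geq 2$ and every $H\in\pee$, which establishes the dichotomy. For AI, every nonzero $H\in\pee$ is regular, so Lemma \ref{lem:inL1}(1) gives $\mu_H^{\ast 2}\in L^1(\pee)$; however the integral above yields $I_2=\infty$ (the integrand behaves like $s^{-1}$ at infinity), so $\mu_H^{\ast 2}\notin L^2(\pee)$, and the dichotomy fails. The only real technical step is the explicit Bessel-function identification of $\psi_\lam$ (equivalently, the transitivity of $K$ on spheres in $\pee$); once that is in hand, the rest is routine Bessel asymptotics combined with the previously cited $L^1$-membership result.
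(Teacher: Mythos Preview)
Your approach is essentially the same as the paper's: both reduce the $L^2$ question via Lemmas \ref{lem:sph_trans_muH} and \ref{lem:plancherel} to a one-variable Bessel integral with weight $\lam^{m_\alp+m_{2\alp}}$, set $\nu=\tfrac{1}{2}(m_\alp+m_{2\alp}-1)$, and use the asymptotic $J_\nu(s)=O(s^{-1/2})$ to conclude convergence for $k\geq 2$ when $\nu>0$ and divergence for $k=2$ when $\nu=0$. The only substantive difference is that you obtain the Bessel form of $\psi_\lam$ from transitivity of $K$ on spheres in $\pee$ (a clean rank-$1$ argument), whereas the paper quotes \cite[Theorem 7.2]{BSO}.

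One point deserves a sentence more of care: your assertion ``$I_r<\infty$ iff $(r-1)(m_\alp+m_{2\alp})>1$'' and the line ``the integrand behaves like $s^{-1}$'' for AI only follow from the \emph{upper} bound $J_\nu(s)=O(s^{-1/2})$, which by itself does not prove divergence. The paper closes this by using the full asymptotic $J_0(s)\approx\sqrt{2/(\pi s)}\cos(s-\tfrac{\pi}{4})$ and summing over intervals where $\cos^4\geq\tfrac14$; you should say explicitly that the integrand is nonnegative with $\cos^4$ having positive mean, so $\int_1^\infty |J_0(s)|^4 s\,ds$ diverges like $\int ds/s$.
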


\begin{proof}
All rank 1 symmetric spaces arise from simple Lie groups.
If $H\in\pee\setminus\{0\}$ then $H$ is a regular element of $\aaa=\Ree H$ and thus
Lemma \ref{lem:inL1} provides that $\mu_H^{\ast 2}\in L^1(\pee)$.

We may write $H=tL$ for $t$ in $\Ree^+$ where $L$ is suitably normalized, so according to
\cite[Theorem 7.2]{BSO} we have
\begin{equation}
\phi_{\lam}(tL)=C(t\lam)^{-(m_\alp+m_{2\alp}-1)/2}J_{(m_\alp+m_{2\alp}-1)/2}(t\lam).
\label{eq:psi1}
\end{equation}
where $J_\nu$ is a Bessel function of the first kind and $C>0$ is a constant, independent
of $t$ and $\lam$.  Then, thanks to Lemmas \ref{lem:sph_trans_muH} and \ref{lem:plancherel},
our goal is to find $k>0$ for which
\begin{equation}
\int_0^\infty |\phi_{-\lam}(tL)|^{2k} \lam^{m_\alp+m_{2\alp}}\,d\lam<\infty \label{eq:int1}
\end{equation}
since $\aaa^+\cong\Ree^+$.  We let $\nu=(m_\alp+m_{2\alp}-1)/2$ so
\[
\nu\begin{cases}
=0 &\text{in case AI (hence BDI if }q=2) \\
>0 &\text{otherwise.}\end{cases}
\]
Using (\ref{eq:psi1}), we get that the formula for the integrand
in (\ref{eq:int1}) is proportional to
\[
|\lam^{-\nu}J_\nu(\lam t)|^{2k}\lam^{2\nu+1}=|J_\nu(\lam t)|^{2k}\lam^{2\nu(1-k)+1}
\]
For $\lam>\!\!>1$  we have estimate $|J_\nu(\lam t)|\leq \frac{C}{\sqrt{\lam t}}$,
so the last formula is proportionally bounded above by
\[
\lam^{-k}\lam^{2(1-k)\nu+1}=\lam^{-(2\nu+1)(k-1)}.
\]
Thus we can see that $k\geq 2$ suffices to obtain the boundedness (\ref{eq:int1}), when $\nu>0$,
so we get $L^1$-$L^2$ dichotomy in these cases.
However, for $\nu=0$, $k\geq 3$ suffices.

Let us now consider $k=2$ in the case $\nu=0$.
Here, the integrand in (\ref{eq:int1}) is
\[
|J_0(t\lam)|^{2k}\lam.
\]
We have for $\lam>\!\!>1$ the asymptotic estimate
\[
J_0(t\lam)\approx \sqrt{\frac{2}{\pi t\lam}}\cos\left(t\lam -\frac{\pi}{4}\right).
\]
With $\lam\geq 1$ we consider $\lam$ for which
$t\lam-\frac{\pi}{4}\in[-\frac{\pi}{4}+k\pi,\frac{\pi}{4}+k\pi]$ for $\lam\geq N>\!\!>1$,
so $\cos^4(t\lam-\frac{\pi}{4})\geq \frac{1}{4}$, to
get estimate
\[
\int_0^\infty |J_0(t\lam)|^4\lam\,d\lam
\geq C\sum_{k=N}^\infty \int_{\frac{k\pi}{t}}^{\frac{(2k+1)\pi}{2t}}\frac{d\lam}{\lam}
=\infty.
\]
Hence we deduce that $k\geq 3$ is necessary to obtain (\ref{eq:int1}) for the $\nu=0$ case.
In other words, $L^1$-$L^2$ dichotomy fails in this case.
\end{proof}

\section{Rank 2 and rank 3 special unitary groups}

\subsection{Basic data} We now consider some symmetric spaces of Cartan type AIII:
\[
\SU(p,q)/\Ss(\Un(p)\times \Un(q)), p=2,3,q\geq p.
\]
These always have rank $p$.  The positive roots structures and multiplicities are as follows,
with data taken from \cite[p.\ 72]{CM}.

\begin{center}
\begin{tabular}{ccccc}\hline
$q$ &  $\Phi^+$ & $m_0$ & $m_1$ & $m_2$ \\ \hline
$p$ &  C$_p$ & 2 &  $-$ & 1\\
$>\!p$ & BC$_p$ & 2 & $2(q-p)$ & 1 \\ \hline
\end{tabular}
\end{center}
Here we have
\begin{align*}
\text{C}_p&=\{\alp_i\pm\alp_j,2\alp_k:1\leq i<j\leq p,1\leq k\leq p\} \\
\text{BC}_p&=\{\alp_i\pm\alp_j,\alp_k,2\alp_k:1\leq i<j\leq p,1\leq k\leq p\}
\end{align*}
with $m_0=m_{\alp_i\pm\alp_j},\,m_1=m_{\alp_k},\,m_2=m_{2\alp_k},\,
1\leq i<j\leq p,1\leq k\leq p$.

Taking basis $(\alp_1,\dots,\alp_p)$ for $\aaa^*$ and $(E_{\alp_1},\dots,E_{\alp_p})$
for $\aaa$, \cite[Theorem 6.1]{BSO} provides for $\psi_\lam(X)$
\[
\psi(\lam,X)=c_0\prod_{k=1}^p(x_k\lam_k)\frac{\det[J_{q-p}(x_i\lam_j)]_{i,j=1,\dots,p}}
{\prod_{1\leq i<j\leq p}(x_i^2-x_j^2)(\lam_i^2-\lam_j^2)}
\]
where $J_r$ denotes a Bessel function of first type.
Based on Lemmas \ref{lem:sph_trans_muH} and \ref{lem:plancherel},
we will wish to find $k\geq 1$ for which
\begin{equation}\label{eq:int3}
\int_\fW|\psi(\lam,X)|^{2k}\prod_{1\leq i<j\leq p}(\lam_i^2-\lam_j^2)^2(\lam_1\dots\lam_p)^{2(q-p)+1}
\,d\lam<\infty
\end{equation}
where we integrate over the Weyl chamber which is parametrized by
\[
\fW=\{(\lam_1,\dots,\lam_p):\lam_1>\dots>\lam_p>0\}.
\]
We will find it convenient to set
\begin{equation}\label{eq:fr}
f_r(s)=\frac{J_r(s)}{s^r}\text{ where }r=q-p
\end{equation}
so $f_0=J_0$.  Then the integrand of (\ref{eq:int3}) becomes
\begin{equation}\label{eq:integrand}
\varphi_k(\lam,X)=\frac{|[f_r(x_i\lam_j)]_{i,j=1,\dots,p}|^{2k}}
{\prod_{1\leq i<j\leq p}(x_i^2-x_j^2)^{2k}(\lam_i^2-\lam_j^2)^{2k-2}}(\lam_1\dots\lam_p)^{1+2r}
\end{equation}
where $|A|=|\det(A)|$ for a matrix $A$.

We will require asymptotic estimates for derivatives of the analytic functions $f_r$ of (\ref{eq:fr}).  Since for $s>\!\!>1$ we have
\begin{equation}\label{eq:Jr-asymp}
J_r(s)\approx \sqrt{\frac{2}{\pi s}}\cos(s-\frac{r\pi}{2}-\frac{\pi}{4})
\end{equation}
we obtain the estimate
\begin{equation}\label{eq:fr-est}
|f_r(s)|\leq \frac{C}{s^{r+1/2}}.
\end{equation}
We use that fact that $\frac{d}{ds}[s^rJ_r(s)]=s^rJ_{r-1}(s)$ to see that
for $r>0$
\begin{equation}\label{eq:fr-der}
f_r'(s)=\frac{d}{ds}\left(\frac{s^rJ_r(s)}{s^{2r}}\right)=\frac{J_{r-1}(s)s-2rJ_r(s)}{s^{r+1}}.
\end{equation}
Hence for $s>\!\!>1$ we have
\begin{equation}\label{eq:fr-dir-est}
|f_r'(s)|\leq \sqrt{\frac{2}{\pi s}}\frac{s+2r}{s^{r+1}}\leq \frac{C'}{s^{r+1/2}}
\end{equation}
where $C'>0$ is independent of $s$.
If $r=0$ then $f_0'(s)=J_0'(s)=J_{-1}(s)=-J_1(s)$, so (\ref{eq:fr-dir-est}) still holds.
We use similar reasoning to that in (\ref{eq:fr-der}) to compute
\[
f_r''(s)=\frac{s^2J_{r-2}(s)+(1-4r)sJ_{r-1}(s)+2r(2r+1)J_r(s)}{s^{r+2}}
\]
to get, as in (\ref{eq:fr-dir-est}), for $s>\!\!>1$ the estimate
\begin{equation}\label{eq:fr-2der-est}
|f_r''(s)|\leq \frac{C''}{s^{r+1/2}}
\end{equation}
where $C''>0$ is independent of $s$.

\subsection{Rank 2 regular case}\label{ssec:reg_rank2}
It will be convenient here, and in the next section, to adopt a decomposition
of the domain $\fW$ of (\ref{eq:int3}).  We consider consider a ball $B$ in $\Ree^2$
of large radius (as dictated by asymptotic estimates of Bessel functions) and set
\begin{align*}
\fW_1&=\{(\lam_1,\lam_2)\in \fW\setminus B:\lam_2\geq \tfrac{1}{2}\lam_1\} \\
\fW_2&=\{(\lam_1,\lam_2)\in \fW\setminus B:\lam_2<\tfrac{1}{2}\lam_1\}.
\end{align*}
so that $\fW=(\fW\cap B)\cup \fW_1\cup \fW_2$.

\begin{theorem}\label{theo:int3-reg}
The flat symmetric space $\SU(2,q)_0/\Ss(\Un(2)\times \Un(q))$ ($q\geq 2$) satisfies the
$L^1$-$L^2$ dichotomy at regular points.
\end{theorem}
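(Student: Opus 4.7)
The plan is to reduce, via Lemmas~\ref{lem:sph_trans_muH} and~\ref{lem:plancherel}, the dichotomy at a regular $H$ to showing that the integral (\ref{eq:int3}) is finite with $k=2$. Indeed, Lemma~\ref{lem:inL1}(1) already gives $\mu_H^{\ast 2}\in L^1(\pee)$ at every regular $H$, and once $\mu_H^{\ast 2}\in L^2(\pee)$ is in hand, every higher convolution power is automatically in $L^2$. Writing $D(\lam,X)=\det[f_r(x_i\lam_j)]_{i,j=1,2}$ with $r=q-p\ge 0$, the crucial feature is that $D$ is antisymmetric in $(\lam_1,\lam_2)$ and so vanishes on the diagonal $\lam_1=\lam_2$. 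I will treat the pieces $\fW\cap B$, $\fW_1$, $\fW_2$ from \S\ref{ssec:reg_rank2} separately.

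On $\fW\cap B$ analyticity forces $D=(\lam_1-\lam_2)\tilde g$ with $\tilde g$ bounded on $B$, so $|D|^{4}/(\lam_1^2-\lam_2^2)^{2}=(\lam_1-\lam_2)^{2}|\tilde g|^{4}/(\lam_1+\lam_2)^{2}$, and a polar-type change of variables $u=\lam_1-\lam_2$, $v=\lam_1+\lam_2$ shows the remaining factor $(\lam_1\lam_2)^{1+2r}/(\lam_1+\lam_2)^{2}$ is integrable near the origin. On $\fW_2$ one has $\lam_1^2-\lam_2^2\ge\tfrac{3}{4}\lam_1^2$, so the denominator provides growth $\lam_1^{4}$; applying (\ref{eq:fr-est}) to each entry of $D$---or a trivial bound when $\lam_2<1$---then gives enough polynomial decay in both variables to integrate (a log factor appears when $r=0$ but is absorbed).

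The main obstacle is the region $\fW_1$, where $\lam_1,\lam_2$ are both large and close so the denominator threatens to vanish. Here I plan to apply the mean value theorem to $\lam_2\mapsto D(\lam_1,\lam_2)$, which vanishes at $\lam_2=\lam_1$, to obtain $|D|\le(\lam_1-\lam_2)\sup_{\xi\in[\lam_2,\lam_1]}|\partial_{\lam_2}D(\lam_1,\xi)|$. The derivative estimate (\ref{eq:fr-dir-est}) together with $\xi\ge\lam_1/2$ yields $|\partial_{\lam_2}D(\lam_1,\xi)|\le C\lam_1^{-(2r+1)}$; combining this with the pointwise bound $|D|\le C'\lam_1^{-(2r+1)}$ from two applications of (\ref{eq:fr-est}) gives
\[
|D|\le C''\min(\lam_1-\lam_2,\,1)\,\lam_1^{-(2r+1)}.
\]
Setting $u=\lam_1-\lam_2\in(0,\lam_1/2]$, the $\fW_1$-integrand is then dominated by a constant multiple of $\min(u,1)^{4}u^{-2}\lam_1^{-4r-4}$; splitting the $u$-integral at $u=1$ yields a bound uniform in $\lam_1$, and $\int_R^\infty\lam_1^{-4r-4}\,d\lam_1<\infty$ for every $r\ge0$.

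The tightest case is $q=2$ (i.e.\ $r=0$): there the mean value estimate alone fails, and one genuinely needs the $\min$ of the two competing bounds, corresponding to the split $u\le 1$ versus $u>1$; for $r\ge1$ the MVT bound already suffices on its own. Summing the three regional contributions establishes (\ref{eq:int3}) with $k=2$, so $\mu_H^{\ast 2}\in L^2(\pee)$, and the $L^1$-$L^2$ dichotomy at regular points follows.
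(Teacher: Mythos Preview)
Your proof is correct and follows essentially the same route as the paper: the identical decomposition $\fW=(\fW\cap B)\cup\fW_1\cup\fW_2$, the mean value theorem on $\fW_1$ to extract the factor $\lam_1-\lam_2$ from $D$, and the direct estimate $\lam_1^2-\lam_2^2\ge\tfrac34\lam_1^2$ on $\fW_2$. The only cosmetic difference is that on $\fW_1$ the paper writes $|D|^{2k}=|D|^2\cdot|D|^{2k-2}$ and applies the MVT only to the second factor (which cancels $(\lam_1-\lam_2)^{2k-2}$ exactly and yields a bound uniform in $u=\lam_1-\lam_2$), whereas you take $|D|\le C\min(u,1)\lam_1^{-(2r+1)}$ and integrate in $u$; both achieve the same end, and your observation that $k=2$ suffices (with higher $k$ following from $\mu_H\ast L^2\subseteq L^2$) is a legitimate shortcut the paper does not take.
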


\begin{proof}
Lemma \ref{lem:inL1} informs us that we need to inspect if (\ref{eq:int3}) holds for
for $k\geq 2$.  Since we are interested in regular points we may assume that
$X=(x_1,x_2)$ where $x_1>x_2>0$, and we are interested in the integrability
of $\varphi_k(\lam,X)$ over which is proportional to
\[
\varphi(\lam_1,\lam_2)=\frac{\begin{vmatrix}f_r(x_1\lam_1) & f_r(x_1\lam_2) \\
f_r(x_2\lam_1) & f_r(x_2\lam_2)\end{vmatrix}^{2k}}{(\lam_1^2-\lam_2^2)^{2k-2}}
(\lam_1\lam_2)^{1+2r}
\]
on $\fW$, where $r=q-2$.

We now estimate on $\fW_1$ where there is potential for $\lam_1-\lam_2$ to become small.
First, using properties of determinants we
see that $\varphi(\lam_1,\lam_2)$ has form
\[
\begin{vmatrix}f_r(x_1\lam_1) & f_r(x_1\lam_2) \\
f_r(x_2\lam_1) & f_r(x_2\lam_2)\end{vmatrix}^2
\begin{vmatrix}\frac{f_r(x_1\lam_1)-f_r(x_1\lam_2)}{\lam_1-\lam_2}
& f_r(x_1\lam_2) \\
 \frac{f_r(x_2\lam_1)-f_r(x_2\lam_2)}{\lam_1-\lam_2}
 & f_r(x_2\lam_2)\end{vmatrix}^{2k-2}
\frac{(\lam_1\lam_2)^{1+2r}}{(\lam_1+\lam_2)^{2k-2}}.
\]
We then apply mean value theorem to turn the middle term into
\[
\begin{vmatrix} x_1f_r'(x_1\lam_{11}) & f_r(x_2\lam_1) \\
x_2f_r'(x_2\lam_{12}) & f_r(x_2\lam_2)\end{vmatrix}^{2k-2}
\]
where $\lam_1>\lam_{1j}>\lam_2$ for $j=1,2$.
We employ the estimates (\ref{eq:fr-est}) and (\ref{eq:fr-dir-est}),
and then the fact that $\lam_1>\lam_2\geq\frac{1}{2}\lam_1$ in $W_1$, to see that
$\varphi(\lam_1,\lam_2)$ is bounded above by
\begin{align*}
&\leq \frac{C'}{(\lam_1\lam_2)^{2r+1}}\left(\frac{1}{(\lam_{11}\lam_2)^{r+1/2}}
+\frac{1}{(\lam_1\lam_{12})^{r+1/2}}\right)^{2k-2}
\frac{(\lam_1\lam_2)^{1+2r}}{(\lam_1+\lam_2)^{2k-2}} \\
&\leq  \frac{C''\lam_1^{2(2r+1)}}{\lam_1^{2(2r+1)}\lam_1^{(2r+1)(k-1)}\lam^{2k-2}}
=\frac{C''}{\lam_1^{(2r+1)(k-1)+2k-2}}
\end{align*}
Hence we get we use polar coordinates $(\lam_1,\lam_2)=(\rho\cos\theta,
\rho\sin\theta)$, and the fact that $\cos\theta\geq \frac{1}{\sqrt{2}}$ on $[0,\frac{\pi}{4}]$
to see that if $k\geq 2$ then
\begin{equation}\label{eq:int_W1}
\int_{\fW_1}\varphi(\lam_1,\lam_2)\,d(\lam_1,\lam_2)
\leq K\int_{\pi/8}^{\pi/4}\int_1^\infty \frac{\rho\,d\rho\,d\theta}{\rho^{(2r+1)(k-1)+2k-2}}<\infty.
\end{equation}

We now estimate on $\fW_2$.
We use (\ref{eq:fr-est}) on variable $\lam_1$, the fact that each $f_r(x_k\lam_2)$ is bounded,
and then the fact that  $\frac{1}{2}\lam_1>\lam_2>0$ so
$\lam_1^2-\lam_2^2> \frac{3}{4}\lam_1^2$, to see that
\begin{align*}
\varphi(\lam_1,\lam_2)
&\leq C'\left(\frac{1}{\lam_1^{r+1/2}}+\frac{1}{\lam_1^{r+1/2}}\right)^{2k}
\frac{(\lam_1\lam_2)^{1+2r}}{(\lam_1^2-\lam_2^2)^{2k-2}} \\
&\leq \frac{C''\lam_1^{2(1+2r)}}{\lam_1^{k(1+2r)}\lam_1^{2(2k-2)}}
=\frac{C''}{\lam_1^{(k-2)(1+2r)+4k-4}}.
\end{align*}
As above, we use polar coordinates to obtain for $k\geq 2$ that
\begin{equation}\label{eq:int_W2}
\int_{\fW_2}\varphi(\lam_1,\lam_2)\,d(\lam_1,\lam_2)
\leq K\int_0^{\pi/8}\int_1^\infty \frac{\rho\,d\rho\,d\theta}{\rho^{(k-2)(1+2r)+4k-4}}<\infty.
\end{equation}

Then (\ref{eq:int_W1}) and (\ref{eq:int_W2}) implies (\ref{eq:int3}) for $k\geq 2$.
\end{proof}

\subsection{Rank 2 singular cases} Here we get distinctive behaviour depending
upon what type of singular point we are considering.  If $H\cong (x_1,x_2)$
with respect to the basis $(E_{\alp_1},E_{\alp_2})$, and with $(x_1,x_2)\in\overline{\fW}$,
we say that
\begin{itemize}
\item $H$ is singular of type D (diagonal) if $x_1=x_2=x>0$, and
\item $H$ is singular of type A (axis) if $x_1=x>x_2=0$.
\end{itemize}

\begin{theorem}\label{theo:singular1}
Let $H$ be a singular point of type D in the flat symmetric space
$\SU(2,q)_0/\Ss(\Un(2)\times \Un(q))$.  Then $L^1$-$L^2$ dichotomy holds for
$\mu_H$ if $q>2$, and fails if $q=2$.
\end{theorem}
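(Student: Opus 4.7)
At a type D point ($x_1=x_2=x>0$), both the determinant $\det[f_r(x_i\lam_j)]$ in the spherical function and the factor $(x_1^2-x_2^2)$ vanish. A first-order Taylor expansion about $x_1=x_2=x$ shows that (\ref{eq:integrand}) takes the limiting form
\[
\varphi_k(\lam,H)=\frac{C\,|\Phi(\lam_1,\lam_2)|^{2k}(\lam_1\lam_2)^{1+2r}}{(\lam_1^2-\lam_2^2)^{2k-2}},
\]
where $\Phi(\lam_1,\lam_2)=\lam_1 f_r'(x\lam_1)f_r(x\lam_2)-\lam_2 f_r'(x\lam_2)f_r(x\lam_1)$. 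The function $\Phi$ is antisymmetric in $(\lam_1,\lam_2)$ and so vanishes on the diagonal; via the mean value theorem together with (\ref{eq:fr-est})--(\ref{eq:fr-2der-est}) one checks that $|\partial_{\lam_1}\Phi|\lesssim \lam^{-2r}$ on the asymptotic diagonal $\lam_1\approx\lam_2\approx\lam$, whence $|\Phi|\lesssim|\lam_1-\lam_2|\lam^{-2r}$ when $|\lam_1-\lam_2|$ is small.

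For $q>2$ (so $r\geq 1$), Lemma \ref{lem:inL1}(2) yields $\mu_H^{\ast 2}\in L^1(\pee)$, so it suffices to show $\int_\fW\varphi_2\,d\lam<\infty$ to conclude $\mu_H^{\ast 2}\in L^2(\pee)$. Decompose $\fW=(\fW\cap B)\cup\fW_1\cup\fW_2$ as in \S\ref{ssec:reg_rank2}; the integrand is bounded on $\fW\cap B$. On $\fW_2$ the first term in $\Phi$ dominates (since $\lam_1\gg\lam_2$), and using $|f_r(s)|,|f_r'(s)|\leq C(1+s)^{-r-1/2}$ one gets $|\Phi|^4\lesssim \lam_1^{2-4r}(1+\lam_2)^{-2-4r}$; combined with $(\lam_1^2-\lam_2^2)^2\geq c\lam_1^4$, integrating out $\lam_2\in(0,\lam_1/2)$ leaves an outer $\lam_1$-integral that converges exactly when $r\geq 1$. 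On $\fW_1$, further split by whether $|\lam_1-\lam_2|\leq 1$ or $>1$: on the far-diagonal piece, direct bounds give $|\Phi|\lesssim\lam^{-2r}$; on the near-diagonal piece, the MVT bound produces $|\Phi|^{2k}\lesssim(\lam_1-\lam_2)^{2k}\lam^{-4kr}$, with the factor $(\lam_1-\lam_2)^{2k}$ absorbing the Plancherel singularity $(\lam_1-\lam_2)^{-(2k-2)}$. Both subcases yield convergent contributions for $r\geq 1$, $k=2$.

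For $q=2$ (so $r=0$), we produce a lower bound forcing $\int_\fW\varphi_2\,d\lam=\infty$, which together with $\mu_H^{\ast 2}\in L^1(\pee)$ gives failure of the dichotomy. Choose $M$ so that the set $E=\{\lam_2\in[M,2M]:|J_0(x\lam_2)|\geq c\}$ has positive measure; on the strip $\{\lam_1\geq L,\lam_2\in E\}\subset\fW_2$ the first term of $\Phi$ dominates, and since $f_0'=-J_1$ with $J_1(s)\approx-\sqrt{2/\pi s}\sin(s-\pi/4)$, we have $|\Phi|^2\gtrsim\lam_1|\sin(x\lam_1+\varphi_0)|^2$. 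Substituting into $\varphi_2$ gives a lower bound proportional to $\lam_1^{-1}\lam_2|\sin(x\lam_1+\varphi_0)|^4$, and an oscillation argument of the type used for the $\nu=0$ case at the end of Theorem \ref{theo:rank1} produces a logarithmically divergent $\lam_1$-integral. Therefore $\mu_H^{\ast 2}\notin L^2(\pee)$, while $\mu_H^{\ast 2}\in L^1(\pee)$ by Lemma \ref{lem:inL1}, establishing failure.

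The main obstacle is the near-diagonal portion of $\fW_1$ in the upper-bound step: the MVT estimate $|\partial_{\lam_1}\Phi|\lesssim\lam^{-2r}$ must be tight (not merely $\lam^{-r+\text{something}}$), since the $r\geq 1$ threshold is precisely the one at which the $\lam^{-4kr}$ decay coming from $|\Phi|^{2k}$ balances against the Plancherel weight $(\lam_1\lam_2)^{2r+1}$ after the $(\lam_1-\lam_2)^{2k-2}$ singularity has been absorbed; this requires combining all three asymptotics (\ref{eq:fr-est}), (\ref{eq:fr-dir-est}), (\ref{eq:fr-2der-est}) and tracking the cancellations in $\partial_{\lam_1}\Phi$ carefully.
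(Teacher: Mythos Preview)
Your argument is correct and follows the paper's approach closely: the same limiting form of the integrand at $x_1=x_2=x$, the same decomposition $\fW=(\fW\cap B)\cup\fW_1\cup\fW_2$, and the same mean-value-theorem mechanism to absorb the $(\lam_1-\lam_2)^{-(2k-2)}$ singularity on $\fW_1$ via the antisymmetry of $\Phi$.

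The one substantive difference is the divergence argument for $q=2$. The paper localizes near the diagonal: it takes rectangles $I_n$ with $\lam_1,\lam_2\approx n\pi/x$ and $\lam_1-\lam_2\approx\pi/(2x)$ fixed, arranges the phases so that both trigonometric factors in the leading asymptotic are bounded below, and sums a series comparable to $\sum 1/n^0$. You instead work in $\fW_2$ with $\lam_2$ confined to a bounded set $E$ where $|J_0(x\lam_2)|\geq c$ and let $\lam_1\to\infty$; then the term $\lam_1 J_1(x\lam_1)J_0(x\lam_2)\sim\sqrt{\lam_1}$ dominates the other term $\sim 1/\sqrt{\lam_1}$, and you obtain a logarithmically divergent $\int\lam_1^{-1}\cos^4(\cdot)\,d\lam_1$. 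Your route is slightly cleaner because the dominance of one term over the other is immediate from the orders of growth, whereas the paper must simultaneously control four trigonometric factors; on the other hand the paper's argument exhibits divergence in the region ($\fW_1$, near-diagonal) that is also the critical region for the upper bound, making the sharpness of the $r\geq 1$ threshold more transparent.
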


\begin{proof}
Lemma \ref{lem:inL1} always provides that $\mu_H^{\ast 2}\in L^1(\pee)$.
Hence it remains to check for which $k\geq 2$ (\ref{eq:int3}) holds.
We first observe that $\varphi_k(\lam,X)$ is proportional to
\begin{align*}
\frac{\begin{vmatrix}f_r(x_1\lam_1) & f_r(x_1\lam_2) \\
f_r(x_2\lam_1) & f_r(x_2\lam_2)\end{vmatrix}}
{(\lam_1^2-\lam_2^2)(x_1-x_2)}
&=\frac{\begin{vmatrix}\frac{f_r(x_1\lam_1)-f_r(x_2\lam_1)}{x_1-x_2}
 & \frac{f_r(x_1\lam_2)-f_r(x_2\lam_2)}{x_1-x_2} \\
f_r(x_2\lam_1) & f_r(x_2\lam_2)\end{vmatrix}}
{\lam_1^2-\lam_2^2} \\
& \overset{x_1,x_2\to x}{\xrightarrow{\hspace{1.4cm}}}
\frac{\begin{vmatrix}\lam_1f'_r(x\lam_1) & \lam_2 f'_r(x\lam_2) \\
f_r(x\lam_1) & f_r(x\lam_2)\end{vmatrix}}
{\lam_1^2-\lam_2^2}
\end{align*}
and hence we are interested in the integrability of
\[
\varphi(\lam_1,\lam_2)=\frac{\begin{vmatrix}\lam_1f'_r(x\lam_1) & \lam_2 f'_r(x\lam_2) \\
f_r(x\lam_1) & f_r(x\lam_2)\end{vmatrix}^{2k}}
{(\lam_1^2-\lam_2^2)^{2k-2}}(\lam_1\lam_2)^{1+2r}
\]
over $\fW$, where $r=q-2$.

Using similar reasoning as in proof of Theorem \ref{theo:int3-reg} we proceed as follows
for estimates on $\fW_1$.
We have that $\varphi(\lam_1,\lam_2)$ is given by the expression
\[
\begin{vmatrix}\lam_1f'_r(x\lam_1) & \lam_2 f'_r(x\lam_2) \\
f_r(x\lam_1) & f_r(x\lam_2)\end{vmatrix}^2
\begin{vmatrix}\frac{\lam_1f'_r(x\lam_1)- \lam_2 f'_r(x\lam_2)}{\lam_1-\lam_2}
& \lam_2 f'_r(x\lam_2) \\
\frac{f_r(x\lam_1)-f_r(x\lam_2)}{\lam_1-\lam_2}
& f_r(x\lam_2)\end{vmatrix}^{2k-2}\frac{(\lam_1\lam_2)^{1+2r}}{(\lam_1+\lam_2)^{2k-2}}
\]
where, since $\frac{d}{d\lam}[\lam f'(x\lam)]=f'(x\lam)+x\lam f''_r(x\lam)$, the middle term becomes
\[
\begin{vmatrix} f'_r(x\lam_{11})+x\lam_{11}f''_r(x\lam_{11})
& \lam_2 f'_r(x\lam_2) \\
xf_r'(x\lam_{12}) &  f_r(x\lam_2)\end{vmatrix}^{2k-2}
\]
where $\lam_1>\lam_{1j}>\lam_2$ for $j=1,2$.  We then use (\ref{eq:fr-dir-est}) and
(\ref{eq:fr-2der-est}) to see that we upper bounds for $\varphi(\lam_1,\lam_2)$
on $W_1$ as follows
\begin{align*}
&\leq C'\left(\frac{\lam_1}{(\lam_1\lam_2)^{r+1/2}}+\frac{\lam_2}{(\lam_1\lam_2)^{r+1/2}}\right)^2 \\
&\phantom{mmmm}\times\left(\frac{1+\lam_{11}}{(\lam_{11}\lam_2)^{r+1/2}}
+\frac{\lam_2}{(\lam_{12}\lam_2)^{r+1/2}}\right)^{2k-2}
\frac{(\lam_1\lam_2)^{1+2r}}{(\lam_1+\lam_2)^{2k-2}} \\
&\leq \frac{C''\lam_1^2\lam_1^{2k-2}\lam_1^{2(1+2r)}}
{\lam_1^{2(2r+1)}\lam_1^{(2r+1)(2k-2)}\lam_1^{2k-2}}
=\frac{C''}{\lam_1^{(2r+1)(2k-2)-2}}
\end{align*}
Applying the same polar coordinate technique as in (\ref{eq:int_W1}) we see that
\begin{equation}\label{eq:int-W1-sing}
\int_{\fW_1}\varphi(\lam_1,\lam_2)\,d(\lam_1,\lam_2)<\infty\text{ if }
\begin{cases}k\geq 2 &\text{when }r\geq 1 \\
k\geq 3 &\text{when }r\geq 0.\end{cases}
\end{equation}

To estimate on $\fW_2$ let us split $\fW_2=\fW_{21}\cup \fW_{22}$ where
\begin{align}
\fW_{21}&=\{(\lam_1,\lam_2)\in \fW_2:\lam_2>c\} \notag \\
\fW_{22}&=\{(\lam_1,\lam_2)\in \fW_2:\lam_2\leq c\} \label{eq:W22}
\end{align}
where $c>0$.
On $\fW_{21}$ we use (\ref{eq:fr-dir-est}) to obtain
\begin{align*}
\varphi(\lam_1,\lam_2)
&\leq C'\left(\frac{\lam_1}{(\lam_1\lam_2)^{r+1/2}}+\frac{\lam_2}{(\lam_1\lam_2)^{r+1/2}}\right)^{2k}
\frac{(\lam_1\lam_2)^{2r+1}}{(\lam_1^2-\lam_2^2)^{2k-2}} \\
&= \frac{C'(\lam_1+\lam_2)^{2k}}{(\lam_1\lam_2)^{(2r+1)(k-1)}(\lam_1^2-\lam_2^2)^{2k-2}} \\
&\leq \frac{C'\lam_1^{2k}}{\lam_1^{(2r+1)(k-1)}\lam_1^{2(2k-2)}}
= \frac{C'}{\lam_1^{(2r+1)(k-1)+2k-2}}
\end{align*}
where we use that $\lam_2\geq c>0$.
On $\fW_{22}$, we additionally use boundedness of $f_r'(x\lam_2)$ and $\lam_2$ to obtain
\begin{align*}
\varphi(\lam_1,\lam_2)
&\leq C''\left(\frac{\lam_1}{\lam_1^{r+1/2}}+\frac{1}{\lam_1^{r+1/2}}\right)^{2k}
\frac{\lam_1^{2r+1}}{(\lam_1^2)^{2k-2}} \\
&=\frac{C''\lam_1^{2k}\lam_1^{2r+1}}{\lam_1^{k(2r+1)}\lam_1^{2(2k-2)}}
=\frac{C''}{\lam_1^{(k-1)(1+2r)+2k-4}}.
\end{align*}
Again we get that
\begin{equation}\label{eq:int-W2-sing}
\int_{\fW_2}\varphi(\lam_1,\lam_2)\,d(\lam_1,\lam_2)<\infty\text{ if }
\begin{cases}k\geq 2 &\text{when }r\geq 1 \\
k\geq 3 &\text{when }r\geq 0.\end{cases}
\end{equation}
Hence (\ref{eq:int-W1-sing}) and (\ref{eq:int-W2-sing}) provide  (\ref{eq:int3}) for $k\geq 2$
when $r\geq 1$, i.e.\ $q>2$; and for $k\geq 3$ when $q=2$.

Now we suppose that $r=0$.  We need only consider the case where $k=2$.
Here $f_0=J_0$, where $J_0'=J_{-1}=-J_1$.  Hence we use (\ref{eq:Jr-asymp}) to see that
\begin{align*}
&\varphi(\lam_1,\lam_2)=
\frac{\begin{vmatrix}\lam_1J_1(x\lam_1) & \lam_2 J_1(x\lam_2) \\
J_0(x\lam_1) & J_0(x\lam_2)\end{vmatrix}^4}
{(\lam_1^2-\lam_2^2)^2}\lam_1\lam_2 \\
&\approx \frac{4|\lam_1\cos(x\lam_1-\frac{3\pi}{4})\cos(x\lam_2-\frac{\pi}{4})
-\lam_2\cos(x\lam_1-\frac{\pi}{4})\cos(x\lam_2-\frac{3\pi}{4})|^4}
{\pi^4x^4\lam_1\lam_2(\lam_1^2-\lam_2^2)^2}
\end{align*}
provided $\lam_2,\lam_2>\!\!> 1$.  Fix $0<\eta<\frac{\pi}{4}$, so we have
\[
|\cos(\eta)|>\frac{1}{\sqrt{2}}\text{ and }
|\cos(\tfrac{\pi}{2}+\eta)|<\frac{1}{\sqrt{2}}.
\]
For positive integer $n$ let $I_n$ denote the interval
\begin{align*}
&\left[\frac{(n+1)\pi-\eta+\frac{3\pi}{4}}{x},\frac{(n+1)\pi+\eta+\frac{3\pi}{4}}{x}\right]
\times \left[\frac{n\pi-\eta+\frac{\pi}{4}}{x},\frac{n\pi+\eta+\frac{\pi}{4}}{x}\right] \\
&=\left[\frac{(n+\frac{1}{2})\pi-\eta+\frac{\pi}{4}}{x},\frac{(n+\frac{1}{2})\pi+\eta+\frac{\pi}{4}}{x}\right] \\ &\quad \times
\left[\frac{(n-\frac{1}{2})\pi-\eta+\frac{3\pi}{4}}{x},\frac{(n-\frac{1}{2})\pi+\eta+\frac{3\pi}{4}}{x}\right].
\end{align*}
Notice for $(\lam_1,\lam_2)\in I_n$, the two descriptions of $I_n$ and $\pi$-periodicity provide
estimates
\begin{align*}
\lam_1|\cos(x\lam_1-\tfrac{3\pi}{4})\cos(x\lam_2-\tfrac{\pi}{4})|
&\geq \frac{(n+\frac{7}{4})\pi+\eta}{x}\cos^2(\eta) \\
\lam_2|\cos(x\lam_1-\tfrac{\pi}{4})\cos(x\lam_2-\tfrac{3\pi}{4})|
&\leq \frac{(n+\frac{1}{4})\pi+\eta}{x}\cos^2(\tfrac{\pi}{2}+\eta)
\end{align*}
while for some constant we get easy estimates
\[
\lam_1,\lam_2,\lam_1^2-\lam_2^2\leq Cn.
\]
Hence as the intervals $I_n$ are pairwise disjoint, each of fixed area $\frac{4\eta}{x^2}$,
and contained in $\fW$ we see that
\begin{align*}
\int_\fW&\varphi(\lam_1,\lam_2)\,d(\lam_1,\lam_2)\geq \sum_{n=1}^\infty \int_{I_n}\varphi(\lam_1,\lam_2)\,d(\lam_1,\lam_2) \\
&\geq  \sum_{n=1}^\infty C''\frac{\left[\frac{(n+\frac{7}{4})\pi+\eta}{x}\cos^2(\eta)
-\frac{(n+\frac{1}{4})\pi+\eta}{x}\cos^2(\tfrac{\pi}{2}+\eta)\right]^4}{n^4}=\infty.
\end{align*}
Hence  (\ref{eq:int3}) fails for $k=2$ when $r=0$, i.e.\ $q=2$.
\end{proof}

\begin{theorem}\label{theo:singular2}
Let $H$ be a singular point of type A in the flat symmetric space
$\SU(2,q)_0/\Ss(\Un(2)\times \Un(q))$.  Then $L^1$-$L^2$ dichotomy
fails when $q\geq 3$.  In fact $\mu_H^{\ast k}\in L^2(\pee)$
only for $k\geq \frac{3}{4}+\frac{q}{2}$ when $q\geq 3$.
\end{theorem}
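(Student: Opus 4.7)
For a type A singular point $H=(x,0)$ (with $x_1=x>0$, $x_2=0$), the plan is to specialize the integrand $\varphi_k(\lam,H)$ of (\ref{eq:integrand}) to $x_2=0$, isolate a region of $\fW$ where $\varphi_k$ admits a clean positive lower bound, and integrate to force divergence whenever $k<\tfrac{3}{4}+\tfrac{q}{2}$.

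First I would substitute $x_1=x$, $x_2=0$ directly into (\ref{eq:integrand}). Since $f_r(0)=1/(2^r r!)>0$, the determinant collapses to $\det[f_r(x_i\lam_j)]=f_r(0)[f_r(x\lam_1)-f_r(x\lam_2)]$, while $(x_1^2-x_2^2)^{2k}=x^{4k}$ is a positive constant. Hence $\varphi_k(\lam,H)$ is a positive constant multiple of
\[
\varphi(\lam_1,\lam_2)=\frac{[f_r(x\lam_1)-f_r(x\lam_2)]^{2k}(\lam_1\lam_2)^{1+2r}}{(\lam_1^2-\lam_2^2)^{2k-2}}.
\]

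Next, on the region $\fW_{22}$ of (\ref{eq:W22}) I would choose a sub-interval $[c_1,c_2]\subset(0,c]$ on which $|f_r(x\lam_2)|$ is uniformly bounded below by a positive constant (possible because $f_r(0)\neq 0$ and $f_r$ is continuous), and take $\lam_1\geq N$ with $N$ so large that the asymptotic decay (\ref{eq:fr-est}) forces $|f_r(x\lam_1)|\leq \tfrac12|f_r(x\lam_2)|$ uniformly on $[c_1,c_2]$. The triangle inequality then gives $[f_r(x\lam_1)-f_r(x\lam_2)]^{2k}\geq c_0>0$, and combined with $(\lam_1^2-\lam_2^2)^{2k-2}\leq\lam_1^{4k-4}$ this produces
\[
\varphi(\lam_1,\lam_2)\geq c_0'\,\lam_2^{1+2r}\,\lam_1^{-(4k-2r-5)}
\]
throughout $[c_1,c_2]\times[N,\infty)$.

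Integration then splits: $\int_{c_1}^{c_2}\lam_2^{1+2r}\,d\lam_2$ is a positive finite constant, while $\int_N^\infty\lam_1^{-(4k-2r-5)}\,d\lam_1$ diverges exactly when $4k-2r-5\leq 1$, i.e., when $k\leq(r+3)/2=(q+1)/2$. For positive integer $k$, the inequality $k<\tfrac34+\tfrac q2$ implies $k\leq(q+1)/2$, since $\tfrac34+\tfrac q2-\tfrac{q+1}{2}=\tfrac14$ and the fractional part of $(q+1)/2$ (which is $0$ or $\tfrac12$ according to the parity of $q$) never exceeds $\tfrac34$, so no integer lies in the interval $((q+1)/2,\tfrac34+\tfrac q2)$. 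Combining with Lemmas \ref{lem:sph_trans_muH} and \ref{lem:plancherel} gives $\mu_H^{\ast k}\notin L^2(\pee)$ for every integer $k<\tfrac34+\tfrac q2$, as required. The only delicate point is the uniform positive lower bound on $[f_r(x\lam_1)-f_r(x\lam_2)]^{2k}$ on $\fW_{22}$; this rests on the decay $|f_r(s)|\leq Cs^{-r-1/2}$ at infinity set against $f_r(0)\neq 0$, after which the rest is a routine integrability calculation.
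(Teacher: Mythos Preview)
Your lower-bound argument on the strip is correct and coincides with the paper's estimate on the set $S=[N,\infty)\times[\eps,c]\subset\fW_{22}$: both show $\int_\fW\varphi\,d\lam=\infty$ whenever $4k-5-2r\le 1$, i.e.\ for every integer $k<\tfrac34+\tfrac q2$. This establishes the second sentence of the theorem.

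However, you have not proved the first sentence---that the $L^1$-$L^2$ dichotomy \emph{fails}. For that you must exhibit some $k$ with $\mu_H^{\ast k}$ absolutely continuous yet not in $L^2(\pee)$; your divergence estimate supplies ``not in $L^2$'', but you never verify ``not singular''. The paper opens its proof by invoking Lemma~\ref{lem:inL1} to obtain $\mu_H^{\ast 2}\in L^1(\pee)$, and since $2<\tfrac34+\tfrac q2$ for every $q\ge 3$, the dichotomy then fails at $k=2$. Without this step your argument is incomplete: a measure whose convolution powers are all singular until they suddenly land in $L^2$ would satisfy the dichotomy trivially, regardless of your divergence bound.

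The paper also goes further than the literal ``only for'' and proves the converse, establishing convergence of $\int_\fW\varphi\,d\lam$ for all $k\ge\tfrac34+\tfrac q2$ via separate upper bounds on $\fW_{22}$, $\fW_{21}$, and $\fW_1$ (equations (\ref{eq:sing-W22}), (\ref{eq:sing-W21}), (\ref{eq:sing-W1})). You omit this; that is defensible given the wording of the statement, but it means you have only bounded the $L^2$ threshold from one side rather than pinned it down exactly as the paper does.
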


\begin{proof}
Lemma \ref{lem:inL1} always provides that $\mu_H^{\ast 2}\in L^1(\pee)$.
Hence it remains to check for which $k\geq 2$ (\ref{eq:int3}) holds.  As before, set $r=q-2$.
Since $f_r(0)=\frac{1}{2^r}$ we find that $\varphi_k(\lam,X)$ is proportional to
\[
\varphi(\lam_1,\lam_2)
=\frac{|f_r(x\lam_1)-f_r(x\lam_2)|^{2k}}{(\lam_1^2-\lam_2^2)^{2k-2}}(\lam_1\lam_2)^{1+2r}.
\]
Let $\fW_2=\fW_{21}\cup \fW_{22}$ as in (\ref{eq:W22}).
We consider first
\[
S=[N,\infty)\times [\eps,c]\subset \fW_{22}
\]
where $c>\eps>0$ are chosen so $f_r(x\lam_2)>0$ for $\lam_2\in [0,c]$, and $N>0$ so
$|f_r(x\lam_1)|<f_r(x\lam_2)$ for $(\lam_1,\lam_2)$ in $S$.  Then
on $S$ we have
\begin{equation}\label{eq:asymp-sing}
\varphi(\lam_1,\lam_2)\approx C'\frac{\lam_1^{1+2r}}{\lam_1^{2(2k-2)}}=\frac{C'}{\lam_1^{4k-5-2r}}.
\end{equation}
Thus $\int_S \varphi(\lam_1,\lam_2)\,d(\lam_1,\lam_2)<\infty$ if and only if
$4k-5-2r\geq 2$ so $k\geq \frac{7}{4}+\frac{r}{2}=\frac{3}{4}+\frac{q}{2}$. Since $f_r$
is bounded, we have that (\ref{eq:asymp-sing}) becomes an inequality showing the sufficiency
implication of
\begin{equation}\label{eq:sing-W22}
\int_{\fW_{22}} \varphi(\lam_1,\lam_2)\,d(\lam_1,\lam_2)<\infty \text{ if and only if }
k\geq \frac{7}{4}+\frac{r}{2}
\end{equation}
where necessity is established on $S$.

On $\fW_{21}$ we estimate
\begin{align*}
\varphi(\lam_1,\lam_2)&\leq C'\left(\frac{1}{\lam_1^{r+1/2}}+\frac{1}{\lam_2^{r+1/2}}\right)^{2k}
\frac{(\lam_1\lam_2)^{1+2r}}{(\lam_1^2-\lam_2^2)^{2k-2}} \\
&=C'\frac{(\lam_1^{r+1/2}+\lam_2^{r+1/2})^{2k}}
{(\lam_1\lam_2)^{(2r+1)(k-1)}(\lam_1^2-\lam_2^2)^{2k-2}} \\
&\leq \frac{C''\lam_1^{(2r+1)k}}{(\lam_1\lam_2)^{(2r+1)(k-1)}\lam_1^{2(2k-2)}}
=\frac{C''}{\lam_2^{(2r+1)(k-1)}\lam_1^{4k-5-2r}}.
\end{align*}
Then, assuming $(2r+1)(k-1)>1$, we see that
\begin{align*}
\int_{W_{21}} \varphi(\lam_1,\lam_2)\,d(\lam_1,\lam_2)
&\leq \int_{2c}^\infty\int_c^{\frac{1}{2}\lam_1}\frac{C''}{\lam_2^{(2r+1)(k-1)}\lam_1^{4k-5-2r}}
\,d\lam_2\,d\lam_1 \\
&= \int_{2c}^\infty \left(K-\frac{K'}{\lam_1^{(2r+1)(k-1)-1}}\right)\frac{1}{\lam_1^{4k-5-2r}}\,d\lam_1.
\end{align*}
This integral converges if $4k-5-2r\geq 2$, hence
\begin{equation}\label{eq:sing-W21}
\int_{\fW_{21}} \varphi(\lam_1,\lam_2)\,d(\lam_1,\lam_2)<\infty \text{ if }
k\geq \frac{7}{4}+\frac{r}{2}.
\end{equation}

On $\fW_1$ we have
that $\varphi(\lam_1,\lam_2)$ is equal to
\begin{align*}
&|f_r(x\lam_1)-f_r(x\lam_2)|^2\left|\frac{f_r(x\lam_1)-f_r(x\lam_2)}{\lam_1-\lam_2}\right|^{2k-2}
\frac{(\lam_1\lam_2)^{1+2r}}{(\lam_1+\lam_2)^{2k-2}} \\
&=|f_r(x\lam_1)-f_r(x\lam_2)|^2x|f_r'(x\lam_{11})|^{2k-2}
\frac{(\lam_1\lam_2)^{1+2r}}{(\lam_1+\lam_2)^{2k-2}} \\
&\leq C\left(\frac{1}{\lam_1^{r+1/2}}+\frac{1}{\lam_2^{r+1/2}}\right)^2
\left(\frac{1}{\lam_{11}^{r+1/2}}\right)^{2k-2}
\frac{(\lam_1\lam_2)^{1+2r}}{(\lam_1+\lam_2)^{2k-2}} \\
&\leq \frac{C''\lam_1^{2(1+2r)}}{\lam_1^{2r+1}\lam_1^{(k-1)(2r+1)}\lam_1^{2k-2}}
=\frac{C''}{\lam_1^{(k-2)(2r+1)+2k-2}}.
\end{align*}
Hence we see that
\begin{equation}\label{eq:sing-W1}
\int_{\fW_1} \varphi(\lam_1,\lam_2)\,d(\lam_1,\lam_2)<\infty \text{ if }k\geq 3.
\end{equation}
Notice that when $r\geq 1$, the combination of (\ref{eq:sing-W22}), (\ref{eq:sing-W21}), and
(\ref{eq:sing-W1}) establishes (\ref{eq:int3}) for $k\geq \frac{7}{4}+\frac{r}{2}$.
\end{proof}

\begin{remark} If we are to consider the case $q=2$, i.e.\
$r=0$, it remains to decide if (\ref{eq:int3}) holds for $k=2$.
Here we have
\begin{align*}
\varphi(\lam_1,\lam_2)&=|J_0(x\lam_1)-J_0(x\lam_2)|^4\frac{\lam_1\lam_2}{(\lam_1^2-\lam_2^2)^2} \\
&\approx\left|\frac{1}{\sqrt{\lam_1}}\cos(x\lam_1-\tfrac{\pi}{4})
-\frac{1}{\sqrt{\lam_2}}\cos(x\lam_2-\tfrac{\pi}{4})\right|^4
\frac{\pi^2\lam_1\lam_2}{4x^2(\lam_1^2-\lam_2^2)^2}
\end{align*}
when $\lam_1,\lam_2>\!\!>1$.  The key to understanding this case is to decide
integrability on $\fW_1$, especially where $\lam_1-\lam_2$ is close to $0$.
As of time of writing, we could not determine this.  \end{remark}

\subsection{Rank 3 regular case} The regular rank 3 case admits the same conclusion
as the regular rank 2 case.  However, the techniques are more difficult, which
is why we write this as a different theorem.  In the course of the proof, we point out at
($\dagger$) the difficulties in adapting our methods to higher rank cases.

\begin{theorem}\label{theo:int4-reg}
The flat symmetric space $\SU(3,q)_0/\Ss(\Un(3)\times \Un(q))$ ($q\geq 3$) satisfies the
$L^1$-$L^2$ dichotomy at regular points.
\end{theorem}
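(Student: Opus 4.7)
Since $\SU(3,q)$ is simple and $H$ is regular, Lemma \ref{lem:inL1}(1) yields $\mu_H^{\ast 2}\in L^1(\pee)$, so the dichotomy reduces to verifying (\ref{eq:int3}) with $p=3$ for every $k\geq 2$. With $X=(x_1,x_2,x_3)$, $x_1>x_2>x_3>0$, and $r=q-3$, the integrand $\varphi_k(\lam,X)$ of (\ref{eq:integrand}) is a $3\times 3$ Bessel determinant raised to $2k$, divided by $\prod_{i<j}(\lam_i^2-\lam_j^2)^{2k-2}$, times the Plancherel weight $(\lam_1\lam_2\lam_3)^{1+2r}$.

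First I would fix a large ball $B\subset\Ree^3$ dictated by the validity of the asymptotic estimates (\ref{eq:Jr-asymp})--(\ref{eq:fr-2der-est}) and partition $\fW\setminus B$ into four subregions according to which of the ratios $\lam_2/\lam_1$ and $\lam_3/\lam_2$ are close to $1$ versus to $0$: a fully comparable region where $\lam_3\geq \tfrac14\lam_1$; two ``one-gap'' regions where exactly one of the consecutive pairs is close and the remaining component is much smaller; and a fully split region where both ratios are small. In each region, the identity of the potentially small factors $\lam_i-\lam_j$ is fixed.

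The algebraic device extending the rank 2 proof is the factorisation
\[
\varphi_k(\lam,X)=\bigl|\det[f_r(x_i\lam_j)]\bigr|^{2}\left|\frac{\det[f_r(x_i\lam_j)]}{\prod_{i<j}(\lam_i-\lam_j)}\right|^{2k-2}\frac{(\lam_1\lam_2\lam_3)^{1+2r}}{\prod_{i<j}(x_i^2-x_j^2)^{2k}\prod_{i<j}(\lam_i+\lam_j)^{2k-2}}.
\]
The middle factor is a divided-difference determinant. In the one-gap regions, a single application of the mean value theorem on the appropriate pair of columns introduces an $f_r'$ at an intermediate point, while in the fully comparable region, iterated MVT (applied to a second column after the first reduction) brings in an $f_r''$. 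Combining the resulting bounds with (\ref{eq:fr-est})--(\ref{eq:fr-2der-est}) and switching to spherical coordinates, each region's contribution reduces to a tail integral $\int_1^\infty \rho^{-\alp}\rho^2\,d\rho$; a power count shows $\alp>3$ once $k\geq 2$.

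The principal obstacle, which I expect to be flagged at $(\dagger)$, is the fully comparable region. The three functions $f_r,f_r',f_r''$ share the same $s^{-(r+1/2)}$ decay, so the $\rho$-accounting depends on a tight cancellation between the Vandermonde factors absorbed from $\prod(\lam_i^2-\lam_j^2)^{2k-2}$ and the Plancherel weight $(\lam_1\lam_2\lam_3)^{1+2r}$. This is precisely the step that does not extend in rank $p\geq 4$: the $p\times p$ divided determinant would call for $f_r^{(p-1)}$, whose $s^{-(r+1/2)}$ bound provides no extra decay, and so the $\rho$-balance no longer closes.
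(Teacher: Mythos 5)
Your strategy is the paper's: reduce to checking (\ref{eq:int3}) for $k\geq 2$ via Lemma \ref{lem:inL1}, split $\fW$ outside a large ball according to which differences $\lam_i-\lam_j$ can be small, convert the determinant divided by those differences into divided--difference determinants controlled by the mean value theorem, and finish with Bessel asymptotics in spherical coordinates. However, the two places where the rank $3$ case is genuinely harder than rank $2$ are exactly the places your sketch glosses over. First, in your region where $\lam_2/\lam_1$ is small and $\lam_3/\lam_2$ is close to $1$, the pair $\lam_2,\lam_3$ can be simultaneously close together \emph{and} close to $0$. A single mean value step in the variable $\lam$ controls $\frac{f_r(x\lam_2)-f_r(x\lam_3)}{\lam_2-\lam_3}$ by boundedness of $f_r'$, but leaves the factor $(\lam_2+\lam_3)^{-(2k-2)}$ coming from $(\lam_2^2-\lam_3^2)^{2k-2}$ untouched; against the weight $(\lam_2\lam_3)^{1+2r}$ the resulting bound behaves like $(\lam_2+\lam_3)^{6+4r-2k}$ near the corner $\lam_2=\lam_3=0$, which is not integrable there once $k\geq 4+2r$, so your ``power count for every $k\geq 2$'' does not close with the estimates (\ref{eq:fr-est})--(\ref{eq:fr-2der-est}) alone. (Note also that the failure is near the wall, not in the radial tail, so this region does not reduce to $\int_1^\infty\rho^{-\alp}\rho^2\,d\rho$.) The paper's fix is to write $f_r(t)=g_r(t^2)$ with $g_r'$ bounded and take the divided difference with respect to $\lam^2$, so that $\frac{f_r(x\lam_2)-f_r(x\lam_3)}{\lam_2^2-\lam_3^2}=x^2g_r'(x^2\mu)$ is bounded and the whole factor $(\lam_2^2-\lam_3^2)^{2k-2}$ is absorbed; equivalently one must use $f_r'(s)=O(s)$ near $0$, or else reduce to $k=2$ via $|\widehat{\mu_H}(\lam)|=|\psi_{-\lam}(H)|\leq 1$ --- you do none of these.

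Second, in the fully comparable region your ``iterated MVT'' is the right move but the omitted step is the actual content. After the first reduction the two leading columns contain $f_r'(x_i\lam_{1i})$ and $f_r'(x_i\lam_{2i})$ at \emph{different} intermediate points, so dividing their difference by $\lam_1-\lam_3$ is not itself an application of the mean value theorem: one gets $x_if_r''(x_i\lam'_{1i})\cdot\frac{\lam_{1i}-\lam_{2i}}{\lam_1-\lam_3}$, and the quotient is bounded by $1$ only because the intermediate points nest, $\lam_3<\lam_{2i}<\lam_2<\lam_{1i}<\lam_1$. This bookkeeping is precisely what the paper flags at $(\dagger)$, and it, not decay, is why the method stops at rank $3$: your diagnosis that at rank $p\geq 4$ the obstruction is that $f_r^{(p-1)}$ has no better bound than $s^{-(r+1/2)}$ is off the mark (all derivatives obey the same estimate, and that is fine for the power count); what is lost at the third mean value stage is control of the ratios of differences of intermediate points. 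Since your write-up neither performs this step nor records the nesting that legitimizes it, the estimate in the fully comparable region is not yet justified as stated, though the intended argument is the same as the paper's.
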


\begin{proof}
As with the proof of Theorem \ref{theo:int3-reg}, it suffices to verify that
(\ref{eq:int3}) holds for $k\geq 2$.  The integrand is proportional to
\[
\varphi(\lam)=\frac{\begin{vmatrix}f_r(x_1\lam_1) & f_r(x_1\lam_2) & f_r(x_1\lam_3) \\
f_r(x_2\lam_1) & f_r(x_2\lam_2) & f_r(x_2\lam_3) \\
f_r(x_3\lam_1) & f_r(x_3\lam_2) & f_r(x_3\lam_3)
\end{vmatrix}^{2k}}
{\prod_{1\leq j<k\leq 3}(\lam_j^2-\lam_k^2)^{2k-2}}
(\lam_1\lam_2\lam_3)^{1+2r}
\]
where $r=q-3$ and $f_r$ is defined in (\ref{eq:fr}).
We let $B$ be a ball of large radius and decompose
$\fW=(\fW\cap B)\cup\fW_1\cup \fW_2\cup \fW_3$
where
\begin{align*}
\fW_1&=\{(\lam_1,\lam_2,\lam_3)\in \fW\setminus B:\lam_3\geq \tfrac{1}{2}\lam_1\} \\
\fW_2&=\{(\lam_1,\lam_2,\lam_3)\in \fW\setminus B:\lam_2\geq \tfrac{1}{2}\lam_1>\lam_3 \} \\
\fW_3&=\{(\lam_1,\lam_2,\lam_3)\in \fW\setminus B: \tfrac{1}{2}\lam_1>\lam_2\}
\end{align*}

On $\fW_1$ we consider estimates on a factor of $\varphi(\lam)$, using mean value theorem techniques.  We have
\begin{align}
&\frac{\begin{vmatrix}f_r(x_1\lam_1) & f_r(x_1\lam_2) & f_r(x_1\lam_3) \\
f_r(x_2\lam_1) & f_r(x_2\lam_2) & f_r(x_2\lam_3) \\
f_r(x_3\lam_1) & f_r(x_3\lam_2) & f_r(x_3\lam_3)
\end{vmatrix}^{2k-2}}
{\prod_{1\leq j<k\leq 3}(\lam_j-\lam_k)^{2k-2}} \notag \\
&=\frac{\begin{vmatrix}\frac{f_r(x_1\lam_1)-f_r(x_1\lam_2)}{\lam_1-\lam_2} &
\frac{f_r(x_1\lam_2)-f_r(x_1\lam_3)}{\lam_2-\lam_3} & f_r(x_1\lam_3) \\
\frac{f_r(x_2\lam_1)-f_r(x_2\lam_2)}{\lam_1-\lam_2} &
\frac{f_r(x_2\lam_2)-f_r(x_2\lam_3)}{\lam_2-\lam_3} & f_r(x_2\lam_3) \\
\frac{f_r(x_3\lam_1)-f_r(x_3\lam_2)}{\lam_1-\lam_2} &
\frac{f_r(x_3\lam_2)-f_r(x_3\lam_3)}{\lam_2-\lam_3} & f_r(x_3\lam_3)
\end{vmatrix}^{2k}}
{(\lam_1-\lam_3)^{2k-2}} \notag \\
&=
\frac{\begin{vmatrix}x_1f_r'(x_1\lam_{11}) & x_1f_r'(x_1\lam_{21}) & f_r(x_1\lam_3) \\
x_2f_r'(x_2\lam_{12}) & x_2f_r'(x_2\lam_{22}) & f_r(x_2\lam_3) \\
x_3f_r'(x_3\lam_{13}) & x_3f_r'(x_3\lam_{23}) & f_r(x_3\lam_3)
\end{vmatrix}^{2k-2}}{(\lam_1-\lam_3)^{2k-2}}
\label{eq:W1_factor}
\end{align}
where $ \lam_j>\lam_{jk}>\lam_{j+1}$ for $j=1,2, k=1,2,3$.  We again apply mean value
theorem technique to get for this factor
\begin{align*}
&=\begin{vmatrix}x_1\frac{f_r'(x_1\lam_{11})-f_r'(x_1\lam_{21})}{\lam_1-\lam_3}
& x_1f_r'(x_1\lam_{21}) & f_r(x_1\lam_3) \\
x_2\frac{f_r'(x_2\lam_{12})-f_r'(x_2\lam_{22})}{\lam_2-\lam_3}
& x_2f_r'(x_2\lam_{22}) & f_r(x_2\lam_3) \\
x_3\frac{f_r'(x_3\lam_{13}) -f_r'(x_3\lam_{23}) }{\lam_1-\lam_3}
& x_3f_r'(x_3\lam_{23}) & f_r(x_3\lam_3)
\end{vmatrix}^{2k-2}  \\
&=\begin{vmatrix}x_1^2f_r''(x_1\lam'_{11}) \frac{\lam_{11}-\lam_{21}}{\lam_1-\lam_3}
& x_1f_r'(x_1\lam_{21}) & f_r(x_1\lam_3) \\
x_2^2f_r''(x_2\lam'_{12}) \frac{\lam_{12}-\lam_{22}}{\lam_1-\lam_3}
& x_2f_r'(x_2\lam_{22}) & f_r(x_2\lam_3) \\
x_3^2f_r''(x_3\lam'_{13}) \frac{\lam_{13}-\lam_{23}}{\lam_1-\lam_3}
& x_3f_r'(x_3\lam_{23}) & f_r(x_3\lam_3)
\end{vmatrix}^{2k-2}
\end{align*}
where $\lam_1>\lam_{1k}>\lam'_{1k}>\lam_{2k}>\lam_3$ for $k=1,2,3$.
We then apply  (\ref{eq:fr-est}), (\ref{eq:fr-dir-est}), (\ref{eq:fr-2der-est})
and the fact that each quotient $|\frac{\lam_{1k}-\lam_{2k}}{\lam_1-\lam_3}|\leq 1$,
to obtain upper bound for this factor
\[
\leq\frac{C'}{\lam_3^{(r+1/2)3(2k-2)}}\leq \frac{C''}{\lam_1^{(1+2r)3(k-1)}}
\]
where constants may depend on $k$, but not $\lambda$.
[($\dagger$) It is at this point we have difficulty moving this argument to
$\SU(n,q)$ with $q\geq n>3$.  At 2 steps we can arrange the ordering
$ \lam_1<\lam_{1k}<\lam_{2k}<\lam_3$ which allows for control of
quotients $|\frac{\lam_{1k}-\lam_{2k}}{\lam_1-\lam_3}|$.  At 3 steps this argument
breaks down.] Thus we find on $\fW_1$ that
\begin{align*}
\varphi(\lam)&\leq
C'''\frac{1}{(\lam_1\lam_2\lam_3)^{(r+1/2)2}}\cdot\frac{1}{\lam_1^{(1+2r)3(k-1)}}\cdot\frac{(\lam_1\lam_2\lam_3)^{1+2r}}
{\prod_{1\leq j<k\leq 3}(\lam_j+\lam_k)^{2k-2}} \\
&\leq \frac{C'''}{\lam_1^{(1+2r)3(k-1)}\lam_1^{2(2k-2)}\lam_2^{2k-2}}
=\frac{C'''}{\lam_1^{3(1+2r)(k-1)+6k-6}}
\end{align*}
Hence, by
setting up spherical coordinates with $\lam_1=\rho\cos\theta\sin\phi$
for a region containing $W_1$, we see that
\[
\int_{\fW_1}\varphi(\lam)\,d\lam
\leq K\int_{\frac{\pi}{4}}^{\frac{\pi}{2}}\int_0^{\frac{\pi}{4}}\int_1^\infty
\frac{\rho^2\,d\rho\,d\theta\,d\phi}{\rho^{3(1+2r)(k-1)+4k-4}}<\infty
\]
provided $k\geq 2$.

On $\fW_2$ we begin with (\ref{eq:W1_factor}) for the same factor.
Since each $f_r(x_k\lam_3)$ is bounded, $\lam_2\geq\frac{1}{2}\lam_1$,
and $\frac{1}{2}\lam_1>\lam_3$ so $\lam_1-\lam_3\geq \frac{1}{2}\lam_1$,
we apply  (\ref{eq:fr-est}) and (\ref{eq:fr-dir-est}) to get
estimate for this factor
\[
\leq \frac{C'}{(\lam_1\lam_2)^{(r+1/2)(2k-2)}\lam_1^{2k-2}}\leq \frac{C''}{\lam_1^{(1+2r)(2k-2)+2k-2}}.
\]
We again  apply  (\ref{eq:fr-est}) and the fact that $f_r(x_k\lam_3)$ is bounded to get
\begin{align*}
\varphi(\lam)&\leq
\frac{1}{(\lam_1\lam_2)^{(r+1/2)2}}\cdot\frac{C''}{\lam_1^{(1+2r)(2k-2)+2k-2}}\cdot\frac{(\lam_1\lam_2\lam_3)^{1+2r}}{\prod_{1\leq j<k\leq 3}(\lam_j+\lam_k)^{2k-2}} \\
&\leq \frac{C'''\lam_3^{1+2r}}{\lam_1^{(1+2r)(2k-2)+2k-2}\lam_1^{2(2k-2)}\lam_2^{2k-2}}
\leq \frac{C'''}{\lam_1^{(1+2r)(2k-3)+8k-8}}
\end{align*}
Again, by setting up spherical coordinates
for a region containing $W_2$, we see that
\[
\int_{\fW_2}\varphi(\lam)\,d\lam
\leq K\int_{\frac{\pi}{2}}^{\frac{\pi}{8}}\int_{\frac{\pi}{8}}^{\frac{\pi}{4}}\int_1^\infty
\frac{\rho^2\,d\rho\,d\theta\,d\phi}{\rho^{(1+2r)(2k-3)+8k-8}}<\infty
\]
provided $k\geq 2$.

To estimate over $\fW_3$ we need to consider the fact that $\lam_2$ and $\lam_3$
may be simultaneously small, and close together.  Define
\[
g_r(t)=\sum_{l=0}^\infty \frac{(-1)^l}{2^{2l+r}l!(r+l)!}t^l
\text{ so }g_r(t^2)=f_r(t).
\]
Hence for $t>0$ we have $g_r(t)=f_r(\sqrt{t})$, so $g_r'(t)=\frac{f_r'(\sqrt{t})}{2\sqrt{t}}$,
and it follows that $g_r'$ is bounded.  Now we estimate a factor of $\varphi(\lam)$ as follows
\begin{align*}
&\frac{\begin{vmatrix}f_r(x_1\lam_1) & f_r(x_1\lam_2) & f_r(x_1\lam_3) \\
f_r(x_2\lam_1) & f_r(x_2\lam_2) & f_r(x_2\lam_3) \\
f_r(x_3\lam_1) & f_r(x_3\lam_2) & f_r(x_3\lam_3)
\end{vmatrix}^{2k-2}}{(\lam_2^2-\lam_3^2)^{2k-2}} \\
&=\begin{vmatrix}f_r(x_1\lam_1)
& \frac{g_r(x_1^2\lam_2^2)-g_r(x_1^2\lam_3^2)}{\lam_2^2-\lam_3^2} & f_r(x_1\lam_3) \\
f_r(x_2\lam_1)
& \frac{g_r(x_2^2\lam_2^2)-g_r(x_2^2\lam_3^2)}{\lam_2^2-\lam_3^2} & f_r(x_2\lam_3) \\
f_r(x_3\lam_1)
&\frac{g_r(x_3^2\lam_2^2)-g_r(x_3^2\lam_3^2)}{\lam_2^2-\lam_3^2} & f_r(x_3\lam_3)
\end{vmatrix}^{2k-2} \\
&=\begin{vmatrix}f_r(x_1\lam_1) & x_1^2g_r'(x_1^2\mu_1) & f_r(x_1\lam_3) \\
f_r(x_2\lam_1) & x_2^2g_r'(x_2^2\mu_2) & f_r(x_2\lam_3) \\
f_r(x_3\lam_1) &  x_3^2g_r'(x_3^2\mu_3) & f_r(x_3\lam_3)
\end{vmatrix}^{2k-2}
\end{align*}
where $\lam_2^2>\mu_k>\lam_3^2$ for $1\leq k\leq 3$.
Then (\ref{eq:fr-est}) and the boundedness of $g_r'(x_k\mu_k)$ and $f_r(x_k\lam_3)$
provide estimate for this factor of
\[
\leq \frac{C'}{\lam_1^{(r+1/2)(2k-2)}}=\frac{C'}{\lam_1^{(1+2r)(k-1)}}.
\]
Now we decompose $\fW_3=\fW_{31}\cup \fW_{32}$ where
\begin{align*}
\fW_{31}&=\{\lam\in \fW_3:\lam_3>c\} \\
\fW_{32}&=\{\lam\in \fW_3:\lam_3\leq c\}
\end{align*}
where $c>0$.
Then as each $f_r(x_j\lam_3)$ is bounded on $\fW_{31}$, and we can suppose $c$ is
large enough to support estimate (\ref{eq:fr-est}),
we obtain
\begin{align*}
\varphi(\lam)&\leq C''\frac{1}{(\lam_1\lam_2\lam_3)^{(r+1/2)2}}\cdot\frac{1}{\lam_1^{(1+2r)(k-1)}}
\cdot \frac{(\lam_1\lam_2\lam_3)^{1+2r}}{(\lam_1^2-\lam_2^2)^{2k-2}(\lam_1^2-\lam_3^2)^{2k-2}} \\
&\leq \frac{C''}{\lam_1^{(1+2r)(k-1)+8k-8}}.
\end{align*}
As each $f_r(x_j\lam_k)$ is bounded on $W_{32}$, for $k=2,3$, as is $\lam_3$, we obtain
the estimate
\begin{align*}
\varphi(\lam)&\leq C'''\frac{1}{\lam_1^{(r+1/2)2}}\cdot\frac{1}{\lam_1^{(1+2r)(k-1)}}
\cdot \frac{(\lam_1\lam_2)^{1+2r}}{(\lam_1^2-\lam_2^2)^{2k-2}(\lam_1^2-\lam_3^2)^{2k-2}} \\
&\leq \frac{C'''}{\lam_1^{(1+2r)(k-2)+8k-8}}\leq \frac{C'''}{\lam_1^{(1+2r)(k-1)+8k-8}}.
\end{align*}
Again, by setting up spherical coordinates
for a region containing $W_3$, we see that
\[
\int_{\fW_3}\varphi(\lam)\,d\lam
\leq K\int_{\frac{3\pi}{8}}^{\frac{\pi}{2}}\int_0^{\frac{\pi}{8}}\int_1^\infty
\frac{\rho^2\,d\rho\,d\theta\,d\phi}{\rho^{(1+2r)(k-1)+8k-8}}<\infty
\]
provided $k\geq 2$.

All of our estimates provide the desired result.
\end{proof}

\end{document}